\algnewcommand\And{\textbf{and}}
\algnewcommand\Or{\textbf{or}}
\algnewcommand\Not{\textbf{not}}
\algnewcommand\In{\textbf{in}}
\algnewcommand\Each{\textbf{each}}
\newcommand{\pii}{\pi}
\newcommand{\squishlist}{
 \begin{list}{$\bullet$}
  { \setlength{\itemsep}{0pt}
     \setlength{\parsep}{3pt}
     \setlength{\topsep}{3pt}
     \setlength{\partopsep}{0pt}
     \setlength{\leftmargin}{2.5em}
     \setlength{\labelwidth}{1em}
     \setlength{\labelsep}{0.5em} } }
\newcommand{\squishlisttwo}{
 \begin{list}{$\triangleright$}
  { \setlength{\itemsep}{0pt}
     \setlength{\parsep}{0pt}
    \setlength{\topsep}{0pt}
    \setlength{\partopsep}{0pt}
    \setlength{\leftmargin}{2em}
    \setlength{\labelwidth}{1.5em}
    \setlength{\labelsep}{0.5em} } }
\newcommand{\squishend}{
  \end{list}  }
\definecolor{verbgray}{gray}{0.9}
\definecolor{shadecolor}{rgb}{.91, .91, .91}
\definecolor{bordercolor}{rgb}{.8, .8, .6}
\definecolor{ultramarine}{rgb}{0, 0.125, 0.376}
 \definecolor{arsenic}{rgb}{0.23, 0.27, 0.29}
 \definecolor{beige}{rgb}{0.96, 0.96, 0.86}
\definecolor{amber}{rgb}{1.0, 0.75, 0.0}
\definecolor{orange}{rgb}{1.0, 0.49, 0.0}
\definecolor{dandelion}{rgb}{0.94, 0.88, 0.19}
  \definecolor{indiagreen}{rgb}{0.07, 0.53, 0.03}
  \definecolor{huntergreen}{rgb}{0.21, 0.37, 0.23}
\newcommand{\blue}[1] {\textcolor{blue}{#1}}
\newcommand{\red}[1] {\textcolor{red}{#1}}
\newcommand{\green}[1] {\textcolor{green}{#1}}
\newcommand{\bblue}[1] {{\bf \textcolor{blue}{#1}}}
\newcommand{\rred}[1] {{\bf \textcolor{red}{#1}}}
\definecolor{shadecolor}{rgb}{.9, .9, .9}
\definecolor{exframecolor}{rgb}{.4, .4, .4}
 \colorlet{framecolor}{ultramarine}
    \newenvironment{frshaded*}{%
    \MakeFramed {\advance\hsize-\width \FrameRestore}}%
    {\endMakeFramed}
    \newcounter{examplecounter}
\newenvironment{exam}{
 \begin{frshaded*}
    \refstepcounter{examplecounter}%
    \noindent
  \textbf{Example \arabic{examplecounter}}%
  \quad
}{%
\end{frshaded*}
}
\newenvironment{frshaded2*}{%
    \MakeFramed {\advance\hsize-\width \FrameRestore}}%
    {\endMakeFramed}
\newtheorem{sRule}[subsection]{Rule}
{\begin{sRule}[#1] \hspace*{\fill} \\ Input: #2. \\ Output: }%
{\end{sRule}}
\newcommand{\PERMS}[1]{\mathbb{P}(#1)}
\newcommand{\BPERMS}[1]{\mathbb{\overline{P}}(#1)}
\newcommand{\CPERMS}[2]{\mathbb{P}_{#2}(#1)}
\newcommand{\PGraph}[1]{\mathbb{G}(#1)}
\newcommand{\BGraph}[1]{\mathbb{\overline{G}}(#1)}
\newcommand{\CGraph}[2]{\mathbb{G}_{#2}(#1)}
\newcommand{\cflip}[1]{\mathsf{flip}_{(#1)}}
\newcommand{\flip}[1]{\mathsf{flip}_{#1}}
\newcommand{\flipper}[2]{\flip{#2}(#1)}
\newcommand{\rev}[1]{\overleftarrow{#1}}
\newcommand{\GreedyCMin}[2]{\mathsf{GreedyMin}_{#2}(#1)}
\newcommand{\RecCMin}[2]{\mathbf{Rec}_{#2}(#1)}
\newcommand{\SEQQ}{\sigma}
\newcommand{\RANK}[1]{\mathsf{Rank}(#1)}
\newcommand{\suc}[1]{\mathsf{succ}(#1)}
\newcommand{\C}[2]{%
\ifnum#2=1{\bf #1}\fi
\ifnum#2=2{\bf \red{#1}}\fi
\ifnum#2=3{{\bf \blue{#1}}}\fi
\ifnum#2=4{\green{#1}}\fi
}
\newcounter{stackCounter}
\newcommand{\reverseList}[1]{
  \let\reversedList\empty
  \foreach\x in {#1} {
    \ifx\reversedList\empty
      \xdef\reversedList{\x}%
    \else
      \xdef\reversedList{\x,\reversedList}%
    \fi
  }
}
\newcommand{\stackBurnt}[2][1]{%
  \colorlet{fullBurnt}{brown!55!black}%
  \colorlet{lessBurnt}{fullBurnt!75!white}%
  \tikzstyle{top}=[ultra thin, fill=brown!58]%
  \tikzstyle{bottom}=[ultra thin, fill=lessBurnt]%
  \pgfmathsetmacro\pancakeHeight{0.1}%
  \pgfmathsetmacro\pancakeWidthMin{0.2}%
  \pgfmathsetmacro\pancakeWidthStep{0.05}%
  \pgfmathsetmacro\pancakeThickness{0.025}%
  \pgfmathsetmacro\stackStep{0.1}%
  \begin{tikzpicture}%
    \begin{scope}[scale=#1]%
      \setcounter{stackCounter}{0}%
      \reverseList{#2}%
      \foreach \x in \reversedList {%
        \pgfmathsetmacro\yBottom{\thestackCounter * \stackStep}%
        \pgfmathsetmacro\yTop{\thestackCounter * \stackStep + \pancakeThickness}%
        \pgfmathabs{\x}%
        \pgfmathsetmacro\xWidth{\pancakeWidthMin + (\pgfmathresult-1) * \pancakeWidthStep}%
        \pgfmathparse{ifthenelse(\x>=0,"bottom","top")}%
        \draw[style=\pgfmathresult] (0,\yBottom) ellipse ({\xWidth} and {\pancakeHeight});
        \pgfmathparse{ifthenelse(\x>=0,"top","bottom")}%
        \draw[style=\pgfmathresult] (0,\yTop) ellipse ({\xWidth} and {\pancakeHeight});
        \stepcounter{stackCounter}%
      }%
    \end{scope}%
  \end{tikzpicture}%
}%
\newcommand{\stack}[2][1]{%
  \tikzstyle{top}=[ultra thin, fill=brown!50]%
  \tikzstyle{bottom}=[ultra thin, fill=brown!58]%
  \pgfmathsetmacro\pancakeHeight{0.1}%
  \pgfmathsetmacro\pancakeWidthMin{0.2}%
  \pgfmathsetmacro\pancakeWidthStep{0.05}%
  \pgfmathsetmacro\pancakeThickness{0.025}%
  \pgfmathsetmacro\stackStep{0.1}%
  \begin{tikzpicture}%
    \begin{scope}[scale=#1]%
      \setcounter{stackCounter}{0}%
      \reverseList{#2}%
      \foreach \x in \reversedList {%
        \pgfmathsetmacro\yBottom{\thestackCounter * \stackStep}%
        \pgfmathsetmacro\yTop{\thestackCounter * \stackStep + \pancakeThickness}%
        \pgfmathabs{\x}%
        \pgfmathsetmacro\xWidth{\pancakeWidthMin + (\pgfmathresult-1) * \pancakeWidthStep}%
        \draw[style=bottom] (0,\yBottom) ellipse ({\xWidth} and {\pancakeHeight});
        \draw[style=top] (0,\yTop) ellipse ({\xWidth} and {\pancakeHeight});
        \stepcounter{stackCounter}%
      }%
    \end{scope}%
  \end{tikzpicture}%
}%
\begin{document}

\title{A Hamilton Cycle in the $k$-Sided Pancake Network}
\author{
B. Cameron
\ \ \   \
J. Sawada
\ \ \   \
  A. Williams
}

\institute{}

\date{\today}
\maketitle

\begin{abstract}
We present a Hamilton cycle in the $k$-sided pancake network and four combinatorial algorithms to traverse the cycle.
The network's vertices are coloured permutations $\pi = p_1p_2\cdots p_n$, where each $p_i$ has an associated colour in $\{0,1,\ldots, k{-}1\}$.
There is a directed edge $(\pi_1,\pi_2)$ if $\pi_2$ can be obtained from $\pi_1$ by a ``flip'' of length $j$, which reverses the first $j$ elements and increments their colour modulo $k$.
Our particular cycle is created using a greedy min-flip strategy, and the average flip length of the edges we use is bounded by a  constant. 
By reinterpreting the order recursively, we can generate successive coloured permutations in $O(1)$-amortized time, or each successive flip by a loop-free algorithm.
We also show how to compute the successor of any coloured permutation in $O(n)$-time. 
Our greedy min-flip construction generalizes known Hamilton cycles for the pancake network (where $k=1$) and the burnt pancake network (where $k=2$).
Interestingly, a greedy max-flip strategy works on the pancake and burnt pancake networks, but it does not work on the $k$-sided network when $k>2$.
\end{abstract}

\section{Introduction}


Many readers will be familiar with the story of Harry Dweighter, the harried waiter who sorts stacks of pancakes for his customers.
He does this by repeatedly grabbing some number of pancakes from the top of the stack, and flipping them over.
For example, if the chef in the kitchen creates the stack \stack[0.6]{4,3,1,2}, then Harry can sort it by flipping over all four pancakes \stack[0.6]{2,1,3,4}, and then the top two \stack[0.6]{1,2,3,4}.

This story came from the imagination of Jacob E. Goodman \cite{Goodman}, who was inspired by sorting folded towels \cite{Newspaper}.
His original interest was an upper bound on the number of flips required to sort a stack of $n$ pancakes.
Despite its whimsical origins, the problem attacted interest from many mathematicians and computer scientists, including a young Bill Gates \cite{Gates}.
Eventually, it also found serious applications, including genomics~\cite{Gene}.

A variation of the original story involves burnt pancakes.
In this case, each pancake has two distinct sides: burnt and unburnt.
When Harry flips the pancakes, the pancakes involved in the flip also turn over, and Harry wants to sort the pancakes so that the unburnt sides are facing up.
For example, Harry could sort the stack \stackBurnt[0.6]{-4,-3,-1,2} by flipping all four \stackBurnt[0.6]{-2,1,3,4}, then the top two \stackBurnt[0.6]{-1,2,3,4}, and the top one \stackBurnt[0.6]{1,2,3,4}.
Similar lines of research developed around this problem (e.g. \cite{Cohen}, \cite{Gene}).
The physical model breaks down beyond two sides, however, many of the same applications do generalize to ``$k$-sided pancakes''.


\subsection{Pancake Networks}

Interconnection networks connect single processors, or groups of processors, together.
In this context, the underlying graph is known as the network, and classic graph measurements (e.g. diameter, girth, connectivity) translate to different performance metrics.
Two networks related to pancake flipping are in Figure \ref{fig:graphs}.

The pancake network $\PGraph{n}$ was introduced in the 1980s \cite{orig-pancake} and various measurements were established (e.g. \cite{Heydari}).
Its vertex set is the set of permutations of $\{1,2,\ldots,n\}$ in one-line notation, which is denoted $\PERMS{n}$.
For example, $\PERMS{2} = \{12,21\}$.
There is an edge between permutations that differ by a \emph{prefix-reversal of length $\ell$}, which reverses the first $\ell$ symbols.
For example, $(3421,4321)$ is the $\ell=2$ edge between \stack[0.6]{3,4,2,1} and \stack[0.6]{4,3,2,1}.
Goodman's original problem is finding the maximum shortest path length to the identity permutation.
Since $\PGraph{n}$ is vertex-transitive, this value is simply its diameter.

The burnt pancake network $\BGraph{n}$ was introduced in the 1990s \cite{Cohen}.
Its vertex set is the set of signed permutations of $\{1,2,\ldots,n\}$, which is denoted $\BPERMS{n}$.
For example, $\BPERMS{2} = \{12,1\bar{2},\bar{1}2,\bar{1}\bar{2},21,2\bar{1},\bar{2}1,\bar{2}\bar{1}\}$ where overlines denote negative symbols.
There is an edge between signed permutations that differ by a \emph{sign-complementing prefix-reversal of length $\ell$}, which reverses the order and sign of the first $\ell$ symbols.
For example, $(\bar{2}\bar{1}34,1234)$ is the $\ell=2$ edge between \stackBurnt[0.6]{-2,-1,3,4} and \stackBurnt[0.6]{1,2,3,4}.

The \emph{$k$-sided pancake network} $\CGraph{n}{k}$ is a directed graph that was first studied in the 2000s \cite{Justan2002}. 
Its vertex set is the set of $k$-coloured permutations of $\{1,2,\ldots,n\}$ in one-line notation, which is denoted $\CPERMS{n}{k}$. 
For example, $\CPERMS{2}{3}$ is illustrated below, where colours the $0$, $1$, $2$ are denoted using superscripts, or in {\bf black}, \rred{red}, \bblue{blue}.
\begin{align*} \label{eq:CPERMS23}
\CPERMS{2}{3} = \{ &
\C{1}{1}\C{2}{1},\C{1}{1}\C{2}{2},\C{1}{1}\C{2}{3},\C{1}{2}\C{2}{1},\C{1}{2}\C{2}{2},\C{1}{2}\C{2}{3},\C{1}{3}\C{2}{1},\C{1}{3}\C{2}{2},\C{1}{3}\C{2}{3},
\C{2}{1}\C{1}{1},\C{2}{1}\C{1}{2},\C{2}{1}\C{1}{3},\C{2}{2}\C{1}{1},\C{2}{2}\C{1}{2},\C{2}{2}\C{1}{3},\C{2}{3}\C{1}{1},\C{2}{3}\C{1}{2},\C{2}{3}\C{1}{3}
\} \\
= \{
& 1^0 2^0, 1^0 2^1, 1^0 2^2, 1^1 2^0, 1^1 2^1, 1^1 2^2, 1^2 2^0, 1^2 2^1, 1^2 2^2, 2^0 1^0, \ldots, 2^2 1^1, 2^2 1^2
\}.
\end{align*}
There is a directed edge from $\pi_1 \in \CPERMS{n}{k}$ to $\pi_2 \in \CPERMS{n}{k}$ if $\pi_1$ can be transformed into $\pi_2$ by a \emph{colour-incrementing prefix-reversal of length $\ell$}, which reverses the order and increments the colour modulo $k$ of the first $\ell$ symbols.
For example, $(\C{2}{2} \C{1}{3} \C{3}{1} \C{4}{1}, \C{1}{1} \C{2}{3} \C{3}{1} \C{4}{1}) = (2^{1}1^{2}3^{0}4^{0}, 1^{0}2^{2}3^{0}4^{0})$ is a directed $\ell=2$ edge.

\begin{figure}
    \centering
    \begin{subfigure}[b]{0.48\columnwidth}
        \centering
        \includegraphics[width=0.85\textwidth]{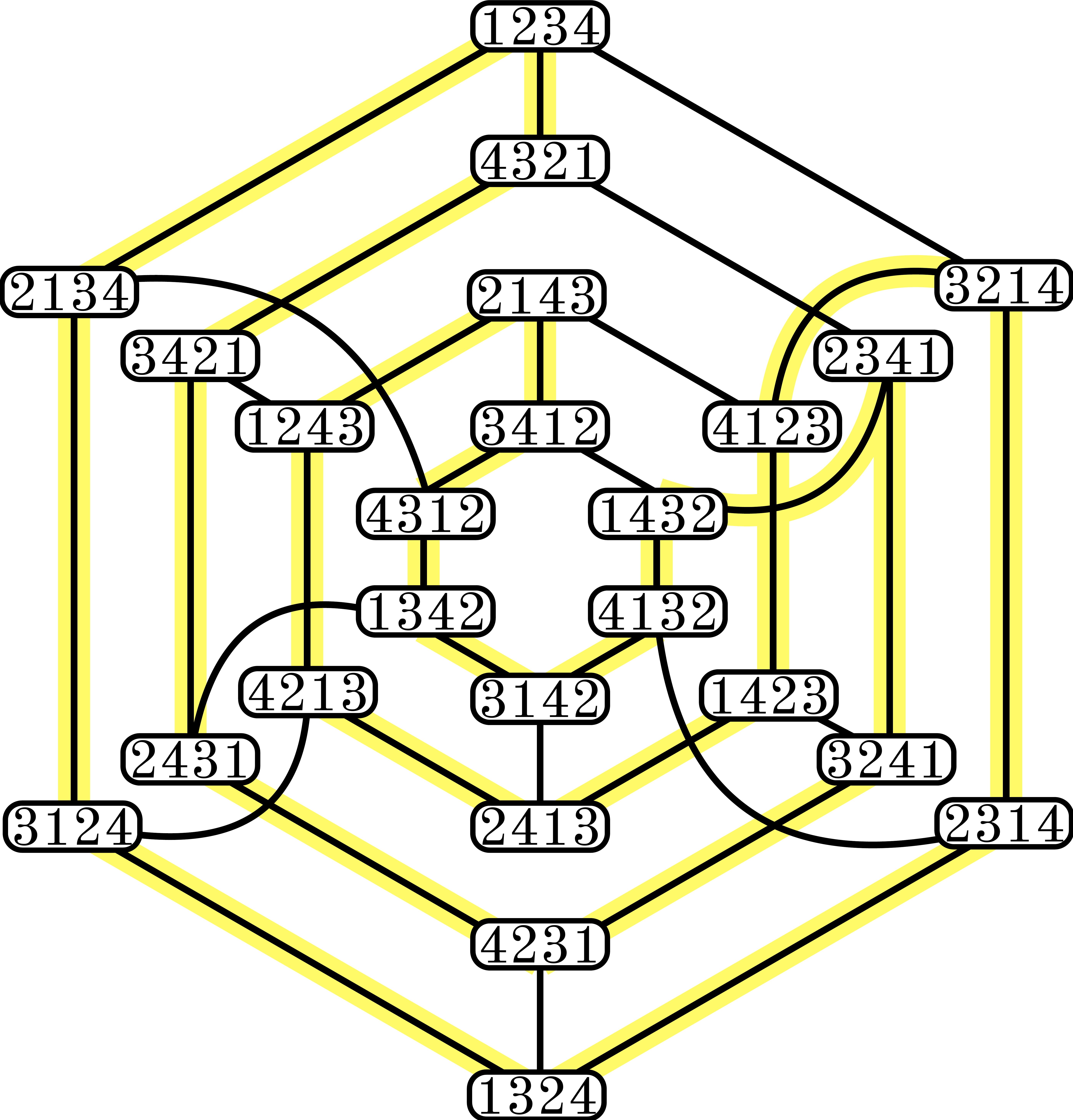} 
        \caption{The pancake network $\PGraph{4}$.}
        \label{fig:graphs_PGraph4}
    \end{subfigure}
    \begin{subfigure}[b]{0.48\columnwidth}
        \centering
        \raisebox{1em}{\includegraphics[width=0.85\textwidth]{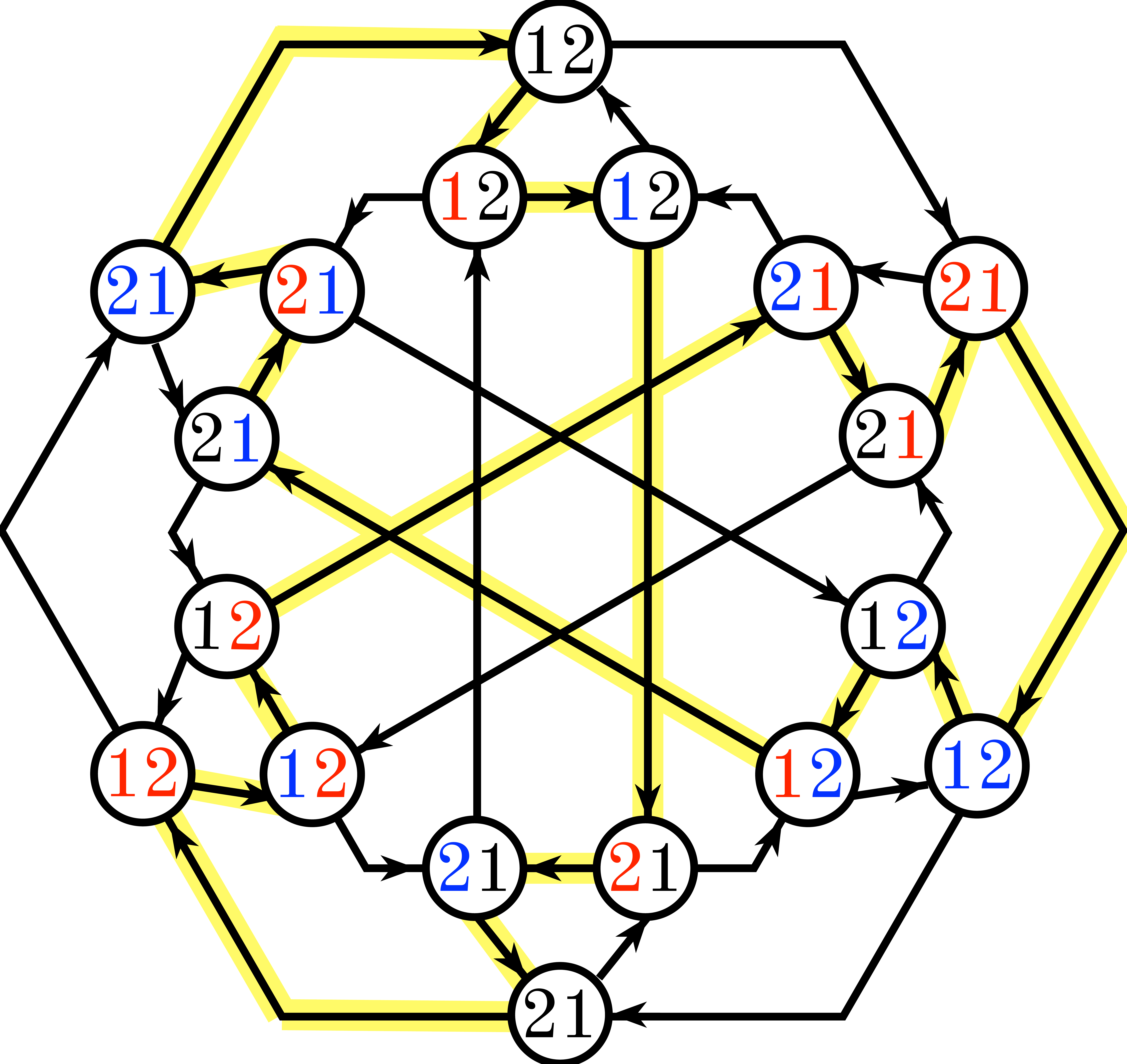}} 
        \caption{The $3$-sided pancake network $\CGraph{2}{3}$.}
        \label{fig:graphs_CGraph23}
    \end{subfigure}
    \caption{Hamilton cycles in a pancake network and a coloured pancake network.
    The highlighted cycles start at $1 2 \cdots n$ (or $1^0 2^0 \cdots n^0$) and are constructed by the greedy min-flip strategy.
    The colours $0,1,2$ in (b)  correspond to black, \red{red}, and \blue{blue}.}
    \label{fig:graphs}
\end{figure}

Notice that $\PGraph{n}$ and $\CGraph{n}{1}$ are isomorphic, while $\BGraph{n}$ and $\CGraph{n}{2}$ are isomorphic, so long as we view each undirected edge as two opposing directed edges.
It also bears mentioning that $\CGraph{n}{k}$ is a (connected) directed Cayley graph, and its underlying group is the wreath product of the cyclic group of order $k$ and the symmetric group of order~$n$.
Thus, $\CGraph{n}{k}$ is expected to have a directed Hamilton cycle by the Lov\'{a}sz conjecture. 

When the context is clear, or the distinction is not necessary, we use the term \emph{flip} for prefix-reversal (when $k=1$), sign-complementing prefix-reversal (when $k=2$), and colour-incrementing prefix-reversal (when $k>2$).

\subsection{(Greedy) Hamilton Cycles}

In this paper, we are not interested in shortest paths in pancake networks, but rather Hamilton cycles.
There are myriad ways that researchers attempt to build Hamilton cycles in highly-symmetric graphs, and the greedy approach is perhaps the simplest (see Williams \cite{GGA}).
This approach initializes a path at a specific vertex, then repeatedly extends the path by a single edge.
More specifically, it uses the highest priority edge (according to some criteria) that leads to a vertex that is not on the path.
The path stops growing when the current vertex is only adjacent to vertices on the path.
A Hamilton cycle has been found if every vertex is on the path, and there is an edge from the final vertex to the first vertex.
Despite its simplicity, the approach is known to work on many well-known graphs \cite{GGA}.

We show that the greedy approach works for the coloured pancake network $\CGraph{n}{k}$ when we prioritize the edges by shortest flip length.
More specifically, we start a path at $1^0 2^0 \cdots n^0 \in \CPERMS{n}{k}$, then repeatedly extend it to a new vertex along the edge that corresponds to the shortest colour-incrementing prefix-reversal.
We refer to this as the \emph{greedy min-flip construction}, denoted $\GreedyCMin{n}{k}$, and it is illustrated in Figure \ref{fig:graphs}.
When $k=1$, the cycle that we create is identical to the one given by Zaks \cite{zaks}, and when $k=2$, our cycle in the burnt pancake network was previously produced by Suzuki, N. Sawada, and Kaneko \cite{Kaneko}; however, both of these papers describe their cycles recursively.  The greedy construction of the cycles in the pancake and burnt pancake networks was previously given by J. Sawada and Williams \cite{greedyFun,greedyPancakes}.

\subsection{Combinatorial Generation}

Ostensibly, the primary contribution of this paper is the Hamiltonicity of $k$-sided pancake networks.
However, the authors' primary motivation was not in finding a Hamilton cycle, but rather in investigating its contributions to combinatorial generation.
\emph{Combinatorial generation} is the research area devoted to the efficient and clever generation of combinatorial objects.
By \emph{efficient} we mean that successive objects can be generated in amortized $O(1)$-time or worst-case $O(1)$-time, regardless of their size.
The former is known as \emph{constant amortized time (CAT)}, while the latter is known as \emph{loop-free}.
By \emph{clever} we mean that non-lexicographic orders are often desirable.
When describing these alternate orders, the authors make liberal use of the term \emph{Gray code} --- in reference to the eponymous binary reflected Gray code patented by Frank Gray \cite{Gray:47}) --- and we refer to our Hamilton cycle as a \emph{colour-incrementing prefix-reversal Gray code} for coloured permutations.
Informally, it is a \emph{flip Gray code}.

There are dozens of publications on the efficient generation of permutation Gray codes.
In fact, comprehensive discussions on this topic date back to Sedgewick's survey in 1977 \cite{Sedgewick:77}, with more modern coverage in Volume 4 of Knuth's \emph{The Art of Computer Programming} \cite{Knuth:V4}. 
However, to our knowledge, there are no published Gray codes for coloured permutations. This is surprising as the combinatorial \cite{Borodin1999,Chen2009,DuaneRemmel2011,Mansour2001,Mansour2003} and algebraic \cite{Athanasiadis2020,Bango2014,Shin2016} properties of coloured permutations have been of considerable interest. Work on the latter is due to the group theoretic interpretation of $\CPERMS{n}{k}$ as the wreath product of the cyclic and symmetric group, $\mathbb{Z}_k\wr S_n$.
We find our new Grady code of interest for two additional reasons.
\begin{enumerate}
    \item Other greedy approaches for generating $\PERMS{n}$ do not seem to generalize to $\CPERMS{n}{k}$.
    \item Flips are natural and efficient operations in certain contexts.
\end{enumerate}

To expand on the first point, consider the Steinhaus-Johnson-Trotter (SJT) order of permutations, which dates back to the 1600s \cite{Knuth:V4}. 
In this order, successive permutations differ by an \emph{adjacent-transition} (or \emph{swap}) meaning that adjacent values in the permutations change place.
In other words, the order for $\PERMS{n}$ traces a Hamilton path in the permutohedron of order $n$.
For example, SJT order for $n=4$ appears below
\begin{align*}
    12\underline{3\mathbf{4}},
    1\underline{2\mathbf{4}}3,
    \underline{1\mathbf{4}}23,
    41\underline{2\mathbf{3}},
    \underline{\mathbf{4}1}32,
    1\underline{\mathbf{4}3}2,
    13\underline{\mathbf{4}2},
    \underline{1\mathbf{3}}24,
    31\underline{2\mathbf{4}},
    3\underline{1\mathbf{4}}2,
    \underline{3\mathbf{4}}12,
    43\underline{1\mathbf{2}}, \\
    \underline{\mathbf{4}3}21,
    3\underline{\mathbf{4}2}1,
    32\underline{\mathbf{4}1},
    \underline{\mathbf{3}2}14,
    23\underline{1\mathbf{4}},
    2\underline{3\mathbf{4}}1,
    \underline{2\mathbf{4}}31,
    42\underline{\mathbf{3}1},
    \underline{\mathbf{4}2}13,
    2\underline{\mathbf{4}1}3,
    21\underline{\mathbf{4}3},
    \underline{\mathbf{2}1}34.
\end{align*}
The symbols that are swapped to create the next permutation are underlined, and the larger value is in bold.
The latter demarcation shows the order's underlying greedy priorities:
Swap the largest value.
For example, consider the fourth permutation in the list, 4123.
The largest value $4$ cannot be swapped to the left (since it is in the leftmost position) or the right (since $1423$ is already in the order), so the next option is to consider $3$, and it can only be swapped to the left, which gives the fifth permutation $4132$.
If this description is perhaps too brief, then we refer the reader to \cite{GGA}.

Now consider greedy generalizations of SJT to signed permutations.
The most natural generalization would involve the use of \emph{sign-complementing adjacent-transpositions} which swap and complement the sign of two adjacent values.
Unfortunately, any approach using these operations is doomed to fail.
This is because the operation does not change the parity of positive and negative values.
The authors experimented with other types of signed swaps --- complementing the leftmost or rightmost value in the swap, or the larger or small value in the swap --- without success.

More surprising is the fact that our greedy min-flip strategy works for coloured permutations, but the analogous max-flip strategy does not.
For example, the max-flip strategy creates the following path in $\CGraph{2}{3}$ before getting stuck.
\begin{equation*}
1^0 2^0, \
2^1 1^1, \
1^2 2^2, \
2^0 1^0, \
1^1 2^1, \
2^2 1^2, \
2^0 1^2, \
1^0 2^1, \
2^2 1^1, \
1^2 2^0, \
2^1 1^0, \
1^1 2^2, \ \text{\Lightning}
\end{equation*}
The issue is that the neighbors of last coloured permutation in the path are already on the path.
More specifically, a flip of length one transforms $1^1 2^2$ into $1^2 2^2$, and a flip of length two transforms $1^1 2^2$ into $2^0 1^2$, both of which appear earlier.
The failure of the max-flip strategy on coloured permutations is surprising due to the fact that it works for both permutations and signed-permutations \cite{greedyFun,greedyPancakes}.

To expand on the second point, note that the time required to flip a prefix is proportional its length.
In particular, if a permutation over $\{1,2,\ldots,n\}$ is stored in an array or linked list of length $n$, then it takes $O(m)$-time to flip a prefix of length $m$%
\footnote{Some unusual data structures can support flips of any lengths in constant-time \cite{Boustro}.}.
Our min-flip strategy ensures that the shortest possible flips are used.
In fact, the average flip length used in our Gray codes is bounded by $e = 2.71828\cdots$ when $k=1$, and the average is even smaller for $k>1$.

We also note that flips can be the most efficient operation in certain situations.
For example, consider a brute force approach to the undirected travelling salesman problem, wherein every Hamilton path of the $n$ cities is represented by a permutation in $\PERMS{n}$.
If we iterate over the permutations using a prefix-reversal Gray code, then successive Hamilton paths differ in a single edge.
For example, the edges in $12345678$ and $43215678$ are identical, except that the former includes $(4,5)$ while the latter includes $(1,5)$.
Thus, the cost of each Hamilton cycle can be updated from permutation to permutation using one addition and subtraction.
More generally, flip Gray codes are the most efficient choice when the cost (or value) of each permutation depends on its unordered pairs of adjacent symbols.
Similarly, our generalization will be the most efficient choice when the cost (or value) of each coloured permutation depends on its unordered pairs of adjacent symbols \emph{and} the minimum distance between their colours.


\subsection{New Results}

We present a flip Gray code for $\CPERMS{n}{k}$ that corresponds to a  Hamilton cycle in the $k$-sided pancake network.  We present  the following four different combinatorial algorithms for traversing the Hamilton cycle, each having unique and interesting properties:
\begin{enumerate} 
\item A greedy algorithm that is easy to describe, but requires an exponential amount of memory.
\item A recursive algorithm, that reveals the structure of the listing and can be implemented in $O(1)$-amortized time.
\item A simple successor rule approach that allows the cycle to start from any vertex (coloured permutation) and takes on average $O(1)$-time amortized over the entire listing.
\item A loop-free algorithm to generate the flip sequence iteratively.
\end{enumerate}

Before we present these algorithms in Section~\ref{sec:algos}, we first present some notation in Section~\ref{sec:back}.  We conclude with a summary and  related work in Section~\ref{sec:summary}.

\section{Notation} \label{sec:back}

Let $\pii = p_1p_2 \cdots p_n$ be a coloured permutation where each $p_i = v_i^{c_i}$ has value $v_i \in \{1,2,\ldots , n\}$ and colour $c_i \in \{0,1,\ldots , k-1\}$.  
Recall that $\CPERMS{n}{k}$ denotes the set of $k$-coloured permutations of $\{1,2,\ldots,n\}$.   
Observe that $ \CPERMS{n}{1}$ corresponds to regular permutations  and $\CPERMS{n}{2}$ corresponds to signed permutations.  
For the remainder of this paper, it is assumed that all permutations are coloured. 

As mentioned earlier, a flip of a  permutation $\pii$, denoted $\flip{i}(\pii)$,  applies a prefix-reversal of length $i$ on $\pii$ that also increments the colour of the flipped elements by $1$ (modulo $k$). As an example for $k=3$:
$$\flip{4}(7^0 1^2 6^1 5^0 3^1 4^1 2^1)  = \mathbf{5^1 6^2 1^0 7^1} 3^1 4^1 2^1. $$

A \textit{pre-perm} is any prefix of a  permutation in $\CPERMS{n}{k}$, i.e. $\mathbf{p}=p_1p_2\cdots p_j$ is a  pre-perm if there exist $p_{j+1},\ldots p_{n}$ such that $p_1p_2\cdots p_n$ is a permutation. 
Note that if $j=n$, then the  pre-perm is a  permutation. 
Let  $\mathbf{p} = p_1p_2\cdots p_j$ be an arbitrary  pre-perm for given a $k$. 
For a given element $p_i=v_i^{c}$, let $p_i^{+s}=v_i^{(c_{i}+s)\  (\operatorname{mod}\ k)}$.  
For $0 \leq i < k$ , let $\mathbf{p^{+i}}$ denote  $p_1^{+i}p_2^{+i}\cdots p_j^{+i}$, i.e. $\mathbf{p}$ with the colour of each symbol $p_i$ incremented by $i$  modulo $k$.  
Note, $\mathbf{p^{+0}} = \mathbf{p}$.  Furthermore, let $\rho(\mathbf{p}) = \mathbf{p^{+(k-1)}} \cdot \mathbf{p^{+(k-2)}} \cdots \mathbf{p^{+0}} = r_1r_2\cdots r_{m}$  be a circular string of length $m=kj$ where $\cdot$ denotes the concatenation of symbols. 
Let $\mathbf{\rho(p)_{i}}$ denote the length $j{-}1$ subword ending with $r_{i-1}$.   

\begin{exam}
%
Consider a pre-perm $\mathbf{p} = 1^0 2^0 3^2$ where $j=3$ and  $k=4$.  Then 
\[\rho(\mathbf{p}) =  1^3 2^3 3^1  \cdot  1^2 2^2 3^0  \cdot   1^1 2^1 3^3 \cdot   1^0 2^0 3^2   \mbox{ \ \ and \ \ } \mathbf{\rho(p)_2} =  3^0 1^3.\]
%

\vspace{-0.1in}
\end{exam}

\noindent
For any  pre-perm $\mathbf{p}=p_1p_2\cdots p_j$, let $\rev{\mathbf{p}}$ denote the reverse of $\mathbf{p}$. i.e. $\rev{\mathbf{p}}=p_jp_{j-1}\cdots p_2p_1$.  
Note that $\rev{\mathbf{p}}$ is not equivalent to applying a flip of length $j$ to $\mathbf{p}$  when $k>1$ as the colours of each symbol do not change in $\rev{\mathbf{p}}$.  
For the remainder of this paper we will use $\mathbf{p}$ to denote a pre-perm, and when it is clear we will use $\pii$ to denote a permutation.

\section{Constructions of a Cyclic Flip Gray code for $\CPERMS{n}{k}$ }\label{sec:algos}

In this section we present four different combinatorial algorithms for generating the same cyclic flip Gray code for  $\CPERMS{n}{k}$.  We begin by 
studying the listing generated by a greedy min-flip algorithm.   By studying the underlying recursive structure of the greedy listing, we provide a recursive description of the flip-sequence and prove it is equivalent to the flip-sequence generated by the greedy algorithm.  This proves that the greedy algorithm generates all permutations in $\CPERMS{n}{k}$.   We then present a successor-rule that determines the successor of a given permutation in the greedy min-flip listing in expected $O(1)$-time.  We conclude by showing how the flip-sequence can be generated via a loop-free algorithm.

\subsection{Greedy Algorithm}

Recall that $\GreedyCMin{n}{k}$ denotes the greedy algorithm on $\CPERMS{n}{k}$ that starts at permutation $1^{0}2^{0}\cdots n^{0}$ and prioritizes the neighbors of each permutation in the $k$-sided pancake network by increasing flip length. 

%

%
\begin{exam} \small  \label{exam:greedy}
The following listing (left of the vertical bar) denotes the output of $\GreedyCMin{3}{3}$ (read top to bottom, then left to right), where black, red and blue correspond to the colours 0,1 and 2 respectively.  This listing is exhaustive and cyclic;  the last permutation differs from the first permutation by a flip of length $n=3$.
To the right of the vertical line is the flip length required to get from the permutation in that position to its successor.
\begin{equation*}
\begin{array}{c @{\hspace{1em}} c @{\hspace{1em}} c @{\hspace{1em}} c@{\hspace{1em}} c @{\hspace{1em}} c @{\hspace{1em}} c @{\hspace{1em}} c @{\hspace{1em}} c @{\hspace{1em}}  | @{\hspace{1em}}  ccccccccc}
\C{1}{1} \C{2}{1} \C{3}{1} & \C{3}{2} \C{1}{1} \C{2}{1} &\C{2}{2} \C{3}{2} \C{1}{1} & \C{1}{2} \C{2}{2} \C{3}{2} &\C{3}{3} \C{1}{2} \C{2}{2} & \C{2}{3} \C{3}{3} \C{1}{2} & \C{1}{3} \C{2}{3} \C{3}{3} & \C{3}{1} \C{1}{3} \C{2}{3} & \C{2}{1} \C{3}{1} \C{1}{3} & 1 & 1& 1& 1& 1& 1& 1& 1& 1 \\
\C{1}{2} \C{2}{1} \C{3}{1} & \C{3}{3} \C{1}{1} \C{2}{1} &\C{2}{3} \C{3}{2} \C{1}{1} & \C{1}{3} \C{2}{2} \C{3}{2} &\C{3}{1} \C{1}{2} \C{2}{2} & \C{2}{1} \C{3}{3} \C{1}{2} & \C{1}{1} \C{2}{3} \C{3}{3} & \C{3}{2} \C{1}{3} \C{2}{3} & \C{2}{2} \C{3}{1} \C{1}{3} & 1 & 1& 1& 1& 1& 1& 1& 1& 1 \\
\C{1}{3} \C{2}{1} \C{3}{1} & \C{3}{1} \C{1}{1} \C{2}{1} &\C{2}{1} \C{3}{2} \C{1}{1} & \C{1}{1} \C{2}{2} \C{3}{2} &\C{3}{2} \C{1}{2} \C{2}{2} & \C{2}{2} \C{3}{3} \C{1}{2} & \C{1}{2} \C{2}{3} \C{3}{3} & \C{3}{3} \C{1}{3} \C{2}{3} & \C{2}{3} \C{3}{1} \C{1}{3} & 2 & 2 & 2 & 2 & 2 & 2 & 2 & 2 & 2  \\
\C{2}{2} \C{1}{1} \C{3}{1} & \C{1}{2} \C{3}{2} \C{2}{1} &\C{3}{3} \C{2}{2} \C{1}{1} & \C{2}{3} \C{1}{2} \C{3}{2} &\C{1}{3} \C{3}{3} \C{2}{2} & \C{3}{1} \C{2}{3} \C{1}{2} & \C{2}{1} \C{1}{3} \C{3}{3} & \C{1}{1} \C{3}{1} \C{2}{3} & \C{3}{2} \C{2}{1} \C{1}{3} & 1 & 1& 1& 1& 1& 1& 1& 1& 1\\
\C{2}{3} \C{1}{1} \C{3}{1} & \C{1}{3} \C{3}{2} \C{2}{1} &\C{3}{1} \C{2}{2} \C{1}{1} & \C{2}{1} \C{1}{2} \C{3}{2} &\C{1}{1} \C{3}{3} \C{2}{2} & \C{3}{2} \C{2}{3} \C{1}{2} & \C{2}{2} \C{1}{3} \C{3}{3} & \C{1}{2} \C{3}{1} \C{2}{3} & \C{3}{3} \C{2}{1} \C{1}{3} & 1 & 1& 1& 1& 1& 1& 1& 1& 1\\
\C{2}{1} \C{1}{1} \C{3}{1} & \C{1}{1} \C{3}{2} \C{2}{1} &\C{3}{2} \C{2}{2} \C{1}{1} & \C{2}{2} \C{1}{2} \C{3}{2} &\C{1}{2} \C{3}{3} \C{2}{2} & \C{3}{3} \C{2}{3} \C{1}{2} & \C{2}{3} \C{1}{3} \C{3}{3} & \C{1}{3} \C{3}{1} \C{2}{3} & \C{3}{1} \C{2}{1} \C{1}{3} & 2 & 2 & 2 & 2 & 2 & 2 & 2 & 2 & 2   \\
\C{1}{2} \C{2}{2} \C{3}{1} & \C{3}{3} \C{1}{2} \C{2}{1} &\C{2}{3} \C{3}{3} \C{1}{1} & \C{1}{3} \C{2}{3} \C{3}{2} &\C{3}{1} \C{1}{3} \C{2}{2} & \C{2}{1} \C{3}{1} \C{1}{2} & \C{1}{1} \C{2}{1} \C{3}{3} & \C{3}{2} \C{1}{1} \C{2}{3} & \C{2}{2} \C{3}{2} \C{1}{3}  & 1 & 1& 1& 1& 1& 1& 1& 1& 1 \\
\C{1}{3} \C{2}{2} \C{3}{1} & \C{3}{1} \C{1}{2} \C{2}{1} &\C{2}{1} \C{3}{3} \C{1}{1} & \C{1}{1} \C{2}{3} \C{3}{2} &\C{3}{2} \C{1}{3} \C{2}{2} & \C{2}{2} \C{3}{1} \C{1}{2} & \C{1}{2} \C{2}{1} \C{3}{3} & \C{3}{3} \C{1}{1} \C{2}{3} & \C{2}{3} \C{3}{2} \C{1}{3}  & 1 & 1& 1& 1& 1& 1& 1& 1& 1 \\
\C{1}{1} \C{2}{2} \C{3}{1} & \C{3}{2} \C{1}{2} \C{2}{1} &\C{2}{2} \C{3}{3} \C{1}{1} & \C{1}{2} \C{2}{3} \C{3}{2} &\C{3}{3} \C{1}{3} \C{2}{2} & \C{2}{3} \C{3}{1} \C{1}{2} & \C{1}{3} \C{2}{1} \C{3}{3} & \C{3}{1} \C{1}{1} \C{2}{3} & \C{2}{1} \C{3}{2} \C{1}{3} & 2 & 2 & 2 & 2 & 2 & 2 & 2 & 2 & 2  \\
\C{2}{3} \C{1}{2} \C{3}{1} & \C{1}{3} \C{3}{3} \C{2}{1} &\C{3}{1} \C{2}{3} \C{1}{1} & \C{2}{1} \C{1}{3} \C{3}{2} &\C{1}{1} \C{3}{1} \C{2}{2} & \C{3}{2} \C{2}{1} \C{1}{2} & \C{2}{2} \C{1}{1} \C{3}{3} & \C{1}{2} \C{3}{2} \C{2}{3} & \C{3}{3} \C{2}{2} \C{1}{3}  & 1 & 1& 1& 1& 1& 1& 1& 1& 1 \\
\C{2}{1} \C{1}{2} \C{3}{1} & \C{1}{1} \C{3}{3} \C{2}{1} &\C{3}{2} \C{2}{3} \C{1}{1} & \C{2}{2} \C{1}{3} \C{3}{2} &\C{1}{2} \C{3}{1} \C{2}{2} & \C{3}{3} \C{2}{1} \C{1}{2} & \C{2}{3} \C{1}{1} \C{3}{3} & \C{1}{3} \C{3}{2} \C{2}{3} & \C{3}{1} \C{2}{2} \C{1}{3}  & 1 & 1& 1& 1& 1& 1& 1& 1& 1 \\
\C{2}{2} \C{1}{2} \C{3}{1} & \C{1}{2} \C{3}{3} \C{2}{1} &\C{3}{3} \C{2}{3} \C{1}{1} & \C{2}{3} \C{1}{3} \C{3}{2} &\C{1}{3} \C{3}{1} \C{2}{2} & \C{3}{1} \C{2}{1} \C{1}{2} & \C{2}{1} \C{1}{1} \C{3}{3} & \C{1}{1} \C{3}{2} \C{2}{3} & \C{3}{2} \C{2}{2} \C{1}{3} & 2 & 2 & 2 & 2 & 2 & 2 & 2 & 2 & 2  \\
\C{1}{3} \C{2}{3} \C{3}{1} & \C{3}{1} \C{1}{3} \C{2}{1} &\C{2}{1} \C{3}{1} \C{1}{1} & \C{1}{1} \C{2}{1} \C{3}{2} &\C{3}{2} \C{1}{1} \C{2}{2} & \C{2}{2} \C{3}{2} \C{1}{2} & \C{1}{2} \C{2}{2} \C{3}{3} & \C{3}{3} \C{1}{2} \C{2}{3} & \C{2}{3} \C{3}{3} \C{1}{3}  & 1 & 1& 1& 1& 1& 1& 1& 1& 1 \\
\C{1}{1} \C{2}{3} \C{3}{1} & \C{3}{2} \C{1}{3} \C{2}{1} &\C{2}{2} \C{3}{1} \C{1}{1} & \C{1}{2} \C{2}{1} \C{3}{2} &\C{3}{3} \C{1}{1} \C{2}{2} & \C{2}{3} \C{3}{2} \C{1}{2} & \C{1}{3} \C{2}{2} \C{3}{3} & \C{3}{1} \C{1}{2} \C{2}{3} & \C{2}{1} \C{3}{3} \C{1}{3}  & 1 & 1& 1& 1& 1& 1& 1& 1& 1 \\
\C{1}{2} \C{2}{3} \C{3}{1} & \C{3}{3} \C{1}{3} \C{2}{1} &\C{2}{3} \C{3}{1} \C{1}{1} & \C{1}{3} \C{2}{1} \C{3}{2} &\C{3}{1} \C{1}{1} \C{2}{2} & \C{2}{1} \C{3}{2} \C{1}{2} & \C{1}{1} \C{2}{2} \C{3}{3} & \C{3}{2} \C{1}{2} \C{2}{3} & \C{2}{2} \C{3}{3} \C{1}{3} & 2 & 2 & 2 & 2 & 2 & 2 & 2 & 2 & 2  \\
\C{2}{1} \C{1}{3} \C{3}{1} & \C{1}{1} \C{3}{1} \C{2}{1} &\C{3}{2} \C{2}{1} \C{1}{1} & \C{2}{2} \C{1}{1} \C{3}{2} &\C{1}{2} \C{3}{2} \C{2}{2} & \C{3}{3} \C{2}{2} \C{1}{2} & \C{2}{3} \C{1}{2} \C{3}{3} & \C{1}{3} \C{3}{3} \C{2}{3} & \C{3}{1} \C{2}{3} \C{1}{3}   & 1 & 1& 1& 1& 1& 1& 1& 1& 1\\
\C{2}{2} \C{1}{3} \C{3}{1} & \C{1}{2} \C{3}{1} \C{2}{1} &\C{3}{3} \C{2}{1} \C{1}{1} & \C{2}{3} \C{1}{1} \C{3}{2} &\C{1}{3} \C{3}{2} \C{2}{2} & \C{3}{1} \C{2}{2} \C{1}{2} & \C{2}{1} \C{1}{2} \C{3}{3} & \C{1}{1} \C{3}{3} \C{2}{3} & \C{3}{2} \C{2}{3} \C{1}{3}  & 1 & 1& 1& 1& 1& 1& 1& 1& 1 \\
\C{2}{3} \C{1}{3} \C{3}{1} & \C{1}{3} \C{3}{1} \C{2}{1} &\C{3}{1} \C{2}{1} \C{1}{1} & \C{2}{1} \C{1}{1} \C{3}{2} &\C{1}{1} \C{3}{2} \C{2}{2} & \C{3}{2} \C{2}{2} \C{1}{2} & \C{2}{2} \C{1}{2} \C{3}{3} & \C{1}{2} \C{3}{3} \C{2}{3} & \C{3}{3} \C{2}{3} \C{1}{3} & 3 & 3 & 3 & 3 & 3 & 3 & 3 & 3 & \mathit{3}
\end{array}
\end{equation*}
Observe that each column of permutations ends with the same element.  Furthermore, the last permutation in each column is a subword of the cyclic word  $  \C{3}{1} \C{2}{1} \C{1}{1}  \C{3}{2} \C{2}{2} \C{1}{2} \C{3}{3} \C{2}{3} \C{1}{3}$.
\end{exam}

%
%
%

Unlike the max-flip approach, we will prove that $\GreedyCMin{n}{k}$  exhaustively generates all permutations in $\CPERMS{n}{k}$ for all $n,k \ge 1$.  We also show that the last permutation in the listing differs by a flip of length $n$ from the first permutation, so the listing is a cyclic flip Gray code. 
To prove this result,  we study the underlying recursive structure of the resulting listings and examine the flip-sequences.

\subsection{Recursive Construction}
 
%

By applying the two observations made following the listing of $\GreedyCMin{3}{3}$  in Example~\ref{exam:greedy},  we arrive at the following recursive definition for a given pre-perm $\mathbf{p}$ of a  permutation in $\CPERMS{n}{k}$:
\footnotesize
\begin{eqnarray}  \label{eq:bminflip}
 \RecCMin{\mathbf{p}}{k}    &= &  \RecCMin{\mathbf{\rho(p)_{m}}}{k} \cdot r_{m}, \  \RecCMin{\mathbf{\rho(p)_{m-1}}}{k} \cdot r_{m-1},   \ldots , \ \RecCMin{\mathbf{\rho(p)_1}}{k}  \cdot r_1,
 \end{eqnarray}
 \normalsize
where  $\RecCMin{p_1}{k}  = p_1^{+0}, p_1^{+1}, p_1^{+2},\ldots, p_1^{+(k-1)}$.
We will prove that $\RecCMin{1^02^0\cdots n^0}{k}$ generates the same (exhaustive) listing of permutations as $\GreedyCMin{n}{k}$.  

\begin{lemma}
\label{lem:BminFL}
Let $\mathbf{p}=p_1p_2p_3\cdots p_j$ be a  pre-perm of a  permutation in $\CPERMS{n}{k}$ for some $n\ge j$. Then the first and last  pre-perms in the listing $\RecCMin{\mathbf{p}}{k}$ are $\mathbf{p}$ and $\rev{\mathbf{p^{+(k-1)}}}$, respectively.   
\end{lemma}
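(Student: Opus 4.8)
The plan is to induct on the length $j$ of the pre-perm $\mathbf{p} = p_1 p_2 \cdots p_j$. The base case $j=1$ is immediate from the definition $\RecCMin{p_1}{k} = p_1^{+0}, p_1^{+1}, \ldots, p_1^{+(k-1)}$: the first entry is $p_1^{+0} = p_1 = \mathbf{p}$, and the last is $p_1^{+(k-1)}$, which equals $\rev{\mathbf{p^{+(k-1)}}}$ since a one-symbol word is its own reverse. For the inductive step I would assume the claim for all pre-perms of length $j-1$ and read the first and last entries of $\RecCMin{\mathbf{p}}{k}$ directly off the recursive expansion~(\ref{eq:bminflip}), applying the induction hypothesis to the leading column $\RecCMin{\mathbf{\rho(p)_{m}}}{k}\cdot r_m$ and the trailing column $\RecCMin{\mathbf{\rho(p)_1}}{k}\cdot r_1$.

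The crucial preliminary step is to identify the two boundary symbols $r_1, r_m$ of the circular word $\rho(\mathbf{p})$ and the two length-$(j-1)$ windows $\mathbf{\rho(p)_{m}}$ and $\mathbf{\rho(p)_1}$. Since $\rho(\mathbf{p}) = \mathbf{p^{+(k-1)}} \cdots \mathbf{p^{+0}}$, the first symbol is $r_1 = p_1^{+(k-1)}$, and the final block is exactly $\mathbf{p^{+0}} = \mathbf{p}$, occupying positions $m-j+1,\ldots,m$, so $r_m = p_j$. I would then observe that both relevant windows lie entirely inside this final block: $\mathbf{\rho(p)_{m}} = r_{m-j+1}\cdots r_{m-1} = p_1 \cdots p_{j-1}$ is the length-$(j-1)$ prefix of $\mathbf{p}$, and $\mathbf{\rho(p)_1} = r_{m-j+2}\cdots r_m = p_2 \cdots p_j$ is its length-$(j-1)$ suffix. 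In particular no circular wrap-around (and hence no colour shift) intervenes, and each is a genuine pre-perm, so the induction hypothesis applies to both.

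For the first entry, the induction hypothesis gives that the first pre-perm of $\RecCMin{\mathbf{\rho(p)_{m}}}{k}$ is $\mathbf{\rho(p)_{m}}$ itself; appending $r_m$ yields $p_1\cdots p_{j-1}\cdot p_j = \mathbf{p}$, as required. For the last entry, the induction hypothesis gives that the last pre-perm of $\RecCMin{\mathbf{\rho(p)_1}}{k}$ is $\rev{(\mathbf{\rho(p)_1})^{+(k-1)}}$; substituting $\mathbf{\rho(p)_1} = p_2\cdots p_j$ and appending $r_1 = p_1^{+(k-1)}$ produces $p_j^{+(k-1)}\cdots p_2^{+(k-1)}\cdot p_1^{+(k-1)}$, which is exactly $\rev{\mathbf{p^{+(k-1)}}}$. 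This closes the induction.

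I expect the only genuine obstacle to be the bookkeeping around the circular indexing of $\rho(\mathbf{p})$: one must pin down precisely which symbols $\mathbf{\rho(p)_1}$ and $\mathbf{\rho(p)_{m}}$ denote and confirm that neither straddles the wrap-around point, where a window would otherwise pick up a colour decrement across a block boundary. Verifying that these two distinguished windows sit wholly within the final block $\mathbf{p^{+0}} = \mathbf{p}$ is exactly what makes the boundary symbols collapse cleanly to $\mathbf{p}$ and $\rev{\mathbf{p^{+(k-1)}}}$; the remainder is direct substitution.
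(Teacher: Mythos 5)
Your proposal is correct and follows essentially the same route as the paper's proof: induction on the length of the pre-perm, reading the first and last entries off the recursive expansion~(\ref{eq:bminflip}) after identifying $r_m=p_j$, $r_1=p_1^{+(k-1)}$, $\mathbf{\rho(p)_{m}}=p_1\cdots p_{j-1}$, and $\mathbf{\rho(p)_1}=p_2\cdots p_j$. The only difference is cosmetic (your explicit check that neither window wraps around the circular string, which the paper leaves implicit).
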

%
\noindent
\begin{proof}
 The proof proceeds by induction on $j$.  When $j=1$, we have $\mathbf{p}=\rev{\mathbf{p}}=p_1$, so $\RecCMin{\mathbf{p}}{k} = \mathbf{p}, \mathbf{p^{+1}}$, $\mathbf{p^{+2}}, \ldots,$ $\mathbf{p^{+(k-1)}}$.   Since $\mathbf{p^{+(k-1)}}= \rev{\mathbf{p^{+(k-1)}}}$  the claim holds.  Now for $1 \leq j < n$ and any  pre-perm $\mathbf{p}=p_1p_2\cdots p_j$ of a  permutation in $\CPERMS{n}{k}$, suppose that the first and last  pre-perms in $\RecCMin{\mathbf{p}}{k}$ are $\mathbf{p}$ and  $\rev{\mathbf{p^{+(k-1)}}}$ respectively. Let $\mathbf{p}=p_1p_2\cdots p_j p_{j+1}$ be a  pre-perm of a  permutation in $\CPERMS{n}{k}$. By definition, the first  pre-perm of $\RecCMin{\mathbf{p}}{k}$  is the first  pre-perm of $\RecCMin{\mathbf{\rho(p)_{m}}}{k} \cdot r_{m}$ where $m=(j+1)k$.  By definition of $\mathbf{\rho(p)}$ and $\mathbf{\rho(p)_{m}}$, it is clear that $r_m=p_{j+1}$ and $\mathbf{\rho(p)_{m}}=p_1p_2\cdots p_{j-1}p_j$. Applying the inductive hypothesis, the first  pre-perm of $\RecCMin{p_1p_2\cdots p_{j-1}p_j}{k}$ is $p_1p_2\cdots p_{j-1}p_j$. Therefore, the first  pre-perm of $\RecCMin{\mathbf{p}}{k}$ is $p_1p_2\cdots p_{j-1}p_{j}\cdot p_{j+1}=\mathbf{p}$.
Similarly, the last  pre-perm of $\RecCMin{\mathbf{p}}{k}$  is the last  pre-perm of $\RecCMin{\mathbf{\rho(p)_1}}{k}  \cdot r_1$. Now,  $r_1=p_1^{+(k-1)}$ and $\mathbf{\rho(p)_1}= p_{2}p_3\cdots p_np_{n+1}$ and, by the inductive hypothesis, the last  pre-perm in $\RecCMin{\mathbf{\rho(p)_1}}{k}$  is $p_{n+1}^{+(k-1)}p_{n}^{+(k-1)}\cdots p_{2}^{+(k-1)}$. Therefore, the last  pre-perm of $\RecCMin{\mathbf{p}}{k}$ is $\rev{\mathbf{p^{+(k-1)}}}$. \hfill $\Box$
\end{proof}

Define the sequence $\SEQQ^k_{n}$ recursively as
\begin{equation} \label{eq:seq}
\SEQQ^k_{n} = 
\begin{cases}
1^{k-1} & \text{ if $n=1$} \\
  (\SEQQ^k_{n-1}, n)^{kn-1}, \SEQQ^k_{n-1} & \text{ if $n> 1$}. \\
\end{cases}  
\end{equation}

\noindent We will show that $\SEQQ^k_{n}$ is the flip-sequence for both $\RecCMin{\mathbf{p}}{k}$ and  $\GreedyCMin{n}{k}$. This flip-sequence is a straightforward generalization of the recurrences for non-coloured permutations~\cite{zaks} and signed permutations~\cite{greedyPancakes}.
\begin{lemma}
\label{lem:BminR}
For $n \geq 1$ , $k\ge 1$, and $\pii \in\CPERMS{n}{k}$, the flip-sequence for  $\RecCMin{\pii}{k}$  is $\SEQQ^k_{n}$.

\end{lemma}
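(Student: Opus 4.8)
The plan is to prove the following slightly stronger statement by induction on the pre-perm length $j$: for \emph{every} pre-perm $\mathbf{p}=p_1p_2\cdots p_j$ of a permutation in $\CPERMS{n}{k}$, the flip-sequence of $\RecCMin{\mathbf{p}}{k}$ is $\SEQQ^k_{j}$. The lemma is then the special case $j=n$. For the base case $j=1$, the list $\RecCMin{p_1}{k}=p_1^{+0},p_1^{+1},\ldots,p_1^{+(k-1)}$ has $k$ entries, and each consecutive pair differs only by incrementing the single colour, i.e.\ by $\flip{1}$; hence its flip-sequence is $1^{k-1}=\SEQQ^k_1$, matching \eqref{eq:seq}.

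For the inductive step, I would fix $\mathbf{p}=p_1\cdots p_j$ with $j>1$ and read the recurrence \eqref{eq:bminflip} as a concatenation of $m=kj$ \emph{blocks}, the $i$-th being $\RecCMin{\mathbf{\rho(p)_i}}{k}\cdot r_i$ for $i=m,m-1,\ldots,1$. First I would handle the flips \emph{inside} a block: appending the fixed symbol $r_i$ in position $j$ is transparent to every flip occurring in $\RecCMin{\mathbf{\rho(p)_i}}{k}$, because by the inductive hypothesis those flips form $\SEQQ^k_{j-1}$ and hence all have length at most $j-1$, so they never touch position $j$. Consequently the within-block flip-sequence is exactly $\SEQQ^k_{j-1}$. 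It then remains to identify the $kj-1$ \emph{transition} flips that join consecutive blocks.

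The heart of the argument is to show that each transition flip has length exactly $j$. Here I would invoke Lemma~\ref{lem:BminFL}: the last pre-perm of block $i$ is $\rev{(\mathbf{\rho(p)_i})^{+(k-1)}}\cdot r_i$ and the first pre-perm of block $i-1$ is $\mathbf{\rho(p)_{i-1}}\cdot r_{i-1}$. Writing these out using the definitions of $\mathbf{\rho(p)_i}$ and $r_i$, applying $\flip{j}$ (reverse all $j$ symbols and add $1$ to every colour), and comparing coordinate-by-coordinate, the positions $2,\ldots,j$ match automatically and the claim collapses to the single circular identity $r_{i-j}=r_i^{+1}$ in the string $\rho(\mathbf{p})=\mathbf{p^{+(k-1)}}\cdots\mathbf{p^{+0}}$. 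This identity is exactly where the colour-shifted copies comprising $\rho(\mathbf{p})$ meet the $+1$ increment of the flip: stepping an index back by one full length-$j$ block passes from the copy $\mathbf{p^{+s}}$ to the copy $\mathbf{p^{+(s+1)}}$ (cyclically), which increments the colour by one. I expect verifying this identity, together with the reversal-and-colour bookkeeping and the wrap-around case $i\le j$ (where $r_{i-j}$ is read as $r_{i-j+m}$), to be the main obstacle.

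Granting the two pieces above, the flip-sequence of $\RecCMin{\mathbf{p}}{k}$ is the $\SEQQ^k_{j-1}$ of the first block, then a flip of length $j$, then the next block's $\SEQQ^k_{j-1}$, and so on across all $kj$ blocks, i.e.\ $(\SEQQ^k_{j-1},j)^{kj-1},\SEQQ^k_{j-1}$, which is precisely $\SEQQ^k_j$ by \eqref{eq:seq}. Taking $j=n$ then yields the lemma.
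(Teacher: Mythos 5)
Your proposal is correct and follows essentially the same route as the paper's proof: induction with base case $\SEQQ^k_1=1^{k-1}$, the block decomposition of \eqref{eq:bminflip} with the inner flip-sequences supplied by the inductive hypothesis, and Lemma~\ref{lem:BminFL} used to show each block transition is a flip of length $j$ via the circular identity $r_{i-j}=r_i^{+1}$ in $\rho(\mathbf{p})$ (the paper writes this as $r_{i-(n+1)}=r_i^{+1}$). Your choice to run the induction over pre-perm length $j$ rather than over $n$ is only a cosmetic difference, and the identity you flag as the remaining obstacle does hold for exactly the reason you give.
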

\begin{proof}
By induction on $n$.  In the base case $\RecCMin{p_1}{k} = p_1, p_1^{+1},p_1^{+2},\ldots, p_1^{+(k-1)}$ and the flip-sequence is $\SEQQ^k_{1} =  1^{k-1}$. For $n \geq 1$ assume that the 
sequence of flips used to create  $\RecCMin{p_1p_2p_3\cdots p_n}{k}$  is given by $\SEQQ^k_{n}$.  
Consider $\RecCMin{\pi}{k}$ where $\pi = p_1p_2p_3\cdots p_{n+1}\in\CPERMS{n+1}{k}$. By the inductive hypothesis, it suffices to show that the last  permutation of  $\RecCMin{\rho(\pi)_{i}}{k} \cdot r_{i}$ and the first  permutation of $\RecCMin{\rho(\pi)_{i-1}}{k} \cdot r_{i-1}$ differ by a flip of length $n+1$ for $i= 2,3,\ldots,m\ (=k(n+1))$. By definition, $\rho(\pi)_{i}=r_{i-n}r_{i-(n-1)}\cdots r_{i-2}r_{i-1}$ where the indices are taken modulo $m$. Therefore, by Lemma~\ref{lem:BminFL}, the last  permutation in $\RecCMin{\rho(\pi)_{i}}{k}$ is $(r_{i-1}r_{i-2}\cdots r_{i-(n-1)}r_{i-n})^{+(k-1)}$. Applying a flip of length $n+1$ to $\RecCMin{\rho(\pi)_{i}}{k}\cdot r_i$ yields
\begin{align}
r_i^{+1}r_{i-n}r_{i-(n-1)}\cdots r_{i-2}r_{i-1}. \label{wrd:reci}
\end{align}
By   Lemma~\ref{lem:BminFL}, the first  permutation of $\RecCMin{\rho(\pi)_{i-1}}{k}$ is $r_{i-(n+1)}r_{i-n}\cdots r_{i-3}r_{i-2}$.  By the definition of $\rho(\pi)$, it follows that $r_{i-(n+1)}=r_i^{+1}$. Thus, from (\ref{wrd:reci}), it follows that $\RecCMin{\rho(\pi)_{i}}{k} \cdot r_{i}$ and the first  permutation of  $\RecCMin{\rho(\pi)_{i-1}}{k} \cdot r_{i-1}$ differ by a flip of length $n+1$. By applying the inductive hypothesis, the flip-sequence for $\RecCMin{\pi }{k}$ is  $ (\SEQQ^k_{n}, n+1)^{k(n+1)-1}, \SEQQ^k_{n}$ which is exactly $\SEQQ^k_{n+1}$. \hfill $\Box$
\end{proof}

%
\begin{theorem}\label{thm:BminGray}
For $n \geq 1$, $k\ge 1$, and $\pii \in\CPERMS{n}{k}$, $\RecCMin{\pii}{k}$  is a cyclic flip Gray code for  $\CPERMS{n}{k}$, where the  first and last  permutations differ by a flip of length $n$.
\end{theorem}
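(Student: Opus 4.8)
The plan is to assemble the theorem from the two preceding lemmas together with one new ingredient: an exhaustiveness (no-repetition) argument. Lemma~\ref{lem:BminR} already guarantees that consecutive entries of $\RecCMin{\pii}{k}$ differ by a flip, and that every flip length occurring in $\SEQQ^k_n$ lies in $\{1,2,\ldots,n\}$ (an immediate induction on the recurrence for $\SEQQ^k_n$, whose only new entries at stage $n$ equal $n$). Since a flip maps $\CPERMS{n}{k}$ to itself, every entry is a genuine coloured permutation, so the listing is a bona fide walk along flip edges; it remains to show that it visits every vertex of $\CPERMS{n}{k}$ exactly once and then closes up into a cycle.

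I will dispose of the cyclic claim first, as it is immediate from Lemma~\ref{lem:BminFL}. Writing $\pii=p_1p_2\cdots p_n$, that lemma gives the first permutation as $\pii$ and the last as $\rev{\mathbf{p^{+(k-1)}}}=p_n^{+(k-1)}p_{n-1}^{+(k-1)}\cdots p_1^{+(k-1)}$. Applying $\flip{n}$ to the last permutation reverses all $n$ symbols and increments each colour by one, producing $p_1^{+(k-1)+1}\cdots p_n^{+(k-1)+1}=p_1^{+0}\cdots p_n^{+0}=\pii$, because $k-1+1\equiv 0 \pmod k$. Hence the first and last permutations differ by a flip of length $n$, closing the cycle.

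The heart of the proof is exhaustiveness, which I would establish by induction on the length $j$ of the pre-perm, using a strengthened hypothesis: \emph{for every pre-perm $\mathbf{p}$ of length $j$ whose symbols use a value set $V$ of size $j$, the listing $\RecCMin{\mathbf{p}}{k}$ contains each of the $k^{j}j!$ coloured arrangements of $V$ exactly once.} The base case $j=1$ is the line $\RecCMin{p_1}{k}=p_1,p_1^{+1},\ldots,p_1^{+(k-1)}$, i.e. the $k$ distinct colourings of the single value. For the inductive step, the key structural fact is that the values along $\rho(\mathbf{p})=\mathbf{p^{+(k-1)}}\cdots\mathbf{p^{+0}}$ are periodic with period $j$, since each block $\mathbf{p^{+s}}$ lists $v_1,\ldots,v_j$ in the same order. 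Consequently every window of $j$ consecutive (circular) symbols contains each value of $V$ exactly once, so each length-$(j-1)$ subword $\mathbf{\rho(p)_i}$ is a valid pre-perm on the value set $V\setminus\{\mathrm{val}(r_i)\}$, and the $m=kj$ symbols $r_1,\ldots,r_m$ are precisely the $kj$ distinct coloured symbols on $V$, each occurring once.

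With this in hand the block decomposition in~(\ref{eq:bminflip}) finishes the argument. Each block $\RecCMin{\mathbf{\rho(p)_i}}{k}\cdot r_i$ consists of permutations all ending in the symbol $r_i$, and by the inductive hypothesis it lists every coloured arrangement of $V\setminus\{\mathrm{val}(r_i)\}$ exactly once (that is, $k^{j-1}(j-1)!$ permutations). Since distinct blocks end in distinct coloured symbols $r_i$, permutations from different blocks cannot coincide, so the $kj$ blocks are pairwise disjoint; together they contribute $kj\cdot k^{j-1}(j-1)!=k^{j}j!$ distinct coloured arrangements of $V$, which is all of them. Taking $\mathbf{p}=\pii$ (so $V=\{1,2,\ldots,n\}$) shows $\RecCMin{\pii}{k}$ is an exhaustive, repetition-free listing of $\CPERMS{n}{k}$; combined with the flip-adjacency of consecutive entries and the length-$n$ wrap-around flip, it is a cyclic flip Gray code. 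The main obstacle is precisely the bookkeeping in this inductive step: one must track value sets carefully and exploit the period-$j$ structure of $\rho(\mathbf{p})$ to guarantee both that each recursive sub-listing is applied to a legitimate pre-perm and that the block-ending symbols are all distinct.
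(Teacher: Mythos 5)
Your proof is correct and takes essentially the same route as the paper's: Lemma~\ref{lem:BminR} gives flip-adjacency of consecutive entries, Lemma~\ref{lem:BminFL} gives the wrap-around flip of length $n$, and exhaustiveness comes from an induction on the recursive block decomposition in~(\ref{eq:bminflip}). The only difference is one of detail: the paper asserts the uniqueness and count of permutations as ``easy to see inductively,'' whereas you actually carry out that induction, using the period-$j$ structure of $\rho(\mathbf{p})$ and the distinctness of the block-ending symbols $r_i$ --- which is precisely the argument the paper leaves implicit.
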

\begin{proof}
From Lemma~\ref{lem:BminR}, the flip-sequence for  $\RecCMin{\pii}{k}$ is given by $\SEQQ^k_{n}$.
Inductively, it is easy to see that the length of the flip-sequence $\SEQQ^k_n$ is $k^nn!-1$ and that each  permutation of $\RecCMin{\pii}{k}$ is unique.  Thus, each of the $k^nn!$  permutations must be listed exactly once and, from Lemma~\ref{lem:BminFL}, the first and last  permutations of the listing differ by a flip of length $n$, making $\RecCMin{\pii}{k}$ a cyclic flip Gray code for  permutations. \hfill $\Box$

\end{proof}

%
\begin{lemma}
\label{lem:BminG}
For $n \geq 1$ and $k\ge 1$, the flip-sequence for $\GreedyCMin{n}{k}$  is  $\SEQQ^k_{n}$.
\end{lemma}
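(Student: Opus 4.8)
The plan is to show that the recursive listing $\RecCMin{1^0 2^0 \cdots n^0}{k}$ is \emph{identical} to the listing produced by the greedy algorithm, and then invoke Lemma~\ref{lem:BminR}. Since the greedy rule is deterministic---from any coloured permutation it moves along the unique shortest flip reaching a not-yet-listed vertex (there is exactly one flip of each length), and halts otherwise---and since both listings start at $1^0 2^0 \cdots n^0$, it suffices to prove that $\RecCMin{1^0\cdots n^0}{k}$ is \emph{locally greedy}: for every non-final permutation $\pii_t$ in the listing, the flip taking $\pii_t$ to its successor $\pii_{t+1}$ has minimum length among all flips of $\pii_t$ whose image does not occur among $\pii_1, \ldots, \pii_t$. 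Given local greediness and the common start, a routine induction on the position $t$ shows the greedy and recursive listings agree step by step, hence share the flip-sequence $\SEQQ^k_n$ by Lemma~\ref{lem:BminR}.

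To prove local greediness I would induct on $n$, establishing the stronger statement that $\RecCMin{\mathbf{q}}{k}$ is locally greedy for \emph{every} pre-perm $\mathbf{q}$, where ``listed'' is interpreted relative to the pre-perms reachable by flips (namely those using exactly the values of $\mathbf{q}$). The base case $n=1$ is immediate, as only a flip of length one is available. For the inductive step I would use the block decomposition~(\ref{eq:bminflip}): $\RecCMin{\pii}{k}$ is the concatenation of the blocks $\RecCMin{\mathbf{\rho(p)_{i}}}{k}\cdot r_i$ for $i = m, m{-}1, \ldots, 1$. Two structural facts drive the argument: (i) a flip of length $\ell < n$ leaves the last symbol untouched, so it preserves the block a permutation belongs to; and (ii) since any $n$ consecutive symbols of $\rho(\mathbf{p})$ carry $n$ distinct values, block $i$ consists of exactly all coloured permutations ending in $r_i$, so the $kn$ blocks partition $\CPERMS{n}{k}$ by last symbol and each block is traversed in one contiguous run.

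With (i) and (ii) in hand I would verify local greediness inside a block and at the block boundaries. Inside block $i$, every flip used has length $\le n{-}1$ and, by (i), stays among permutations ending in $r_i$; by (ii) these are listed only during block $i$, so ``listed globally so far'' coincides with ``listed so far within this block,'' and the inductive hypothesis applied to $\RecCMin{\mathbf{\rho(p)_{i}}}{k}$ gives that no shorter flip reaches an unlisted permutation. At the end of block $i$ all permutations ending in $r_i$ have appeared, so by (i) every flip of length $\le n{-}1$ returns to an already-listed vertex; the recursive listing therefore uses a flip of length $n$ (as established in the proof of Lemma~\ref{lem:BminR}), and this flip reaches the first permutation of the next block, which is unlisted because $\RecCMin{\pii}{k}$ lists each permutation exactly once (Theorem~\ref{thm:BminGray}). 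Hence length $n$ is genuinely the shortest flip to an unlisted vertex, completing the induction.

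The main obstacle is fact (ii)---pinning down precisely which permutations occupy each block and confirming that each last-symbol class is exhausted in a single contiguous run---since this is exactly what lets me equate the global notion of ``already listed'' with the local, within-block notion and thereby transfer the inductive hypothesis. Everything else (the effect of short flips on the last symbol, and the deterministic step-by-step matching of the two listings) is bookkeeping once this partition is secured.
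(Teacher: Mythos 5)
Your argument is correct and rests on the same two facts the paper's proof uses: a flip of length $\ell$ preserves the suffix beyond position $\ell$, and the recursive listing exhausts each suffix class in one contiguous run before the prescribed longer flip is applied. The paper merely packages this as a contradiction at the first index where the greedy flip-sequence and $\SEQQ^k_{n}$ disagree (a shorter flip there would land on a permutation whose suffix class is already exhausted, hence on a repeat), rather than as your direct ``local greediness'' induction on $n$, so the two proofs are essentially the same.
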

\begin{proof}  
By contradiction. Suppose the sequence of flips used by $\GreedyCMin{n}{k}$ differs from $\SEQQ^k_{n}$ and let $j$ be the smallest value such that the $j$-th flip used to create $\GreedyCMin{n}{k}$ differs from the $j$-th value of $\SEQQ^k_{n}$.  Let these flip lengths be $s$ and $t$ respectively.   Since $\GreedyCMin{n}{k}$  follows a greedy minimum-flip strategy and because $\SEQQ^k_{n}$ produces a valid flip Gray code for  permutations by Theorem~\ref{thm:BminGray} where no  permutation is repeated, it must be that $s < t$.   Let $\pii = p_1p_2p_3\cdots p_n$ denote the $j$-th  permutation in the listing $\GreedyCMin{n}{k}$, i.e. the  permutation immediately prior to the $j$-th flip. 
Since $\sigma^k_n$ is the flip-sequence for $\RecCMin{1^02^0\cdots n^0}{k}$ by Lemma~\ref{lem:BminR}, from the recursive definition it follows inductively that all other permutations with suffix $p_tp_{t+1}\cdots p_n$ appear before $\pii$ in $\RecCMin{1^02^0\cdots n^0}{k}$, since no permutations are repeated by Theorem~\ref{thm:BminGray}. Since $\sigma^k_n$ and the sequence of flips used by $\GreedyCMin{n}{k}$ agree until the $j$-th value, all other  permutations with suffix $p_tp_{t+1}\cdots p_n$ appear before $\pii$ in $\GreedyCMin{n}{k}$. Therefore, flipping $\pii$ by a flip of length $s<t$ results in a  permutation already visited in  $\GreedyCMin{n}{k}$ before index $j$  contradicting the fact that $\GreedyCMin{n}{k}$ produces a list of  permutations without repetition.   
\hfill $\Box$
\end{proof}
%

By definition, $\GreedyCMin{n}{k}$  starts with the  permutation $1^02^0\cdots n^0$ and by Lemma~\ref{lem:BminFL},\\
$\RecCMin{1^02^0\cdots n^0}{k}$ also starts with $1^02^0\cdots n^0$.  Since they are each created by the  same flip-sequence by Lemma~\ref{lem:BminR} and Lemma~\ref{lem:BminG}, we get the following corollary. 

\begin{corollary}  \label{cor:Bmin}
For $n \geq 1$ and $k\ge 1$, the listings $\GreedyCMin{n}{k}$ and  $\RecCMin{1^02^0\cdots n^0}{k}$ are equivalent.
\end{corollary}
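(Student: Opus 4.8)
The plan is to exploit the fact that a flip-generated listing is uniquely determined by just two pieces of data: its initial permutation and the sequence of flip-lengths applied to successive permutations. Since each $\flip{i}$ is a deterministic operation on $\CPERMS{n}{k}$, once the starting vertex and the flip-sequence are both fixed, every subsequent permutation in the listing is forced. Thus it suffices to verify that $\GreedyCMin{n}{k}$ and $\RecCMin{1^02^0\cdots n^0}{k}$ agree on these two pieces of data, and then invoke this determinism.

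First I would confirm that both listings begin at the same permutation. For $\GreedyCMin{n}{k}$ this holds by definition, since the greedy construction is initialized at $1^02^0\cdots n^0$. For the recursive listing, I would apply Lemma~\ref{lem:BminFL} with $\mathbf{p}=1^02^0\cdots n^0$, whose conclusion states precisely that the first pre-perm of $\RecCMin{\mathbf{p}}{k}$ is $\mathbf{p}$ itself. Hence both listings share the common first permutation $1^02^0\cdots n^0$.

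Next I would match the flip-sequences. Lemma~\ref{lem:BminR} gives that the flip-sequence of $\RecCMin{\pii}{k}$ is $\SEQQ^k_n$, and Lemma~\ref{lem:BminG} establishes the identical conclusion for $\GreedyCMin{n}{k}$. With the common starting permutation and the common flip-sequence $\SEQQ^k_n$ in hand, the deterministic observation above forces the two listings to coincide term by term, which is the desired equivalence. I expect no genuine obstacle in this argument: the heavy lifting has already been done in the preceding lemmas, and the only point requiring minor care is to ensure that both lemmas index the flip-sequence consistently (that is, the $j$-th entry of $\SEQQ^k_n$ is the flip applied to the $j$-th permutation of each listing) and that both listings have the same length $k^n n!$, which follows from the count in Theorem~\ref{thm:BminGray}.
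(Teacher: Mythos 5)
Your argument is exactly the paper's: both listings start at $1^02^0\cdots n^0$ (by definition for the greedy listing, by Lemma~\ref{lem:BminFL} for the recursive one), both are driven by the same flip-sequence $\SEQQ^k_n$ (Lemmas~\ref{lem:BminR} and~\ref{lem:BminG}), and determinism of the flip operation forces the listings to coincide. This matches the paper's proof, so no further comment is needed.
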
  
%


\subsection{Successor Rule}
\label{sec:signedminsuccess}

In this section, we will generalize the successor rules found for non-coloured permutations and signed permutations in \cite{successor} for $\GreedyCMin{n}{k}$ for $k> 2$. We say a  permutation in $\CPERMS{n}{k}$ is \emph{increasing} if it corresponds to a length $n$ subword of the circular string $\rho(1^02^0\cdots n^0)$.  For example if $n=6$ and $k=4$, then the following permutations are all increasing: 
$$2^33^3  4^3 5^3 6^3 1^2 \ \ \ \  \  5^16^11^02^03^04^0 \ \ \ \ \   1^02^03^04^05^06^0 \ \ \ \  \  5^06^01^32^33^34^3.$$  A  permutation is \emph{decreasing} if it is a reversal of an increasing permutation.  
A pre-perm is \emph{increasing} (\emph{decreasing}) if it corresponds to a  subsequence of an increasing (decreasing)   permutation (when the permutation is thought of as a sequence).  For example, $5^16^12^04^0$ is an increasing pre-perm, but
$5^1 2^0 4^0 6^0$ and $1^2 2^2 3^1 4^0$ are not. Given a  permutation $\pii_2$, let $\suc{\pii_2}$ denote the successor of $\pii_2$ in  $\RecCMin{\pii}{k}$ when the listing is considered to be cyclic.

\begin{lemma} 
\label{lem:Bminsucc}
Let $\pii_2 = q_1q_2\cdots q_n$ be a  permutation in the (cyclic) listing $\RecCMin{\pii}{k}$, where $\pii=p_1p_2\cdots p_n$ is increasing.   Let $q_1 q_2 \cdots q_j$ be the longest prefix of $\pii_2$ that is decreasing.  Then $\suc{\pii_2} = \flipper{\pii_2}{j}.$

\end{lemma}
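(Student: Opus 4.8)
The plan is to prove the successor rule by leveraging the recursive structure of $\RecCMin{\pii}{k}$ established in Lemma~\ref{lem:BminFL} and the flip-sequence characterization of Lemma~\ref{lem:BminR}, rather than reasoning about the greedy listing directly. First I would set up an induction on $n$ that tracks \emph{where} within the nested recursive blocks the permutation $\pii_2 = q_1 q_2 \cdots q_n$ sits. The key structural fact, read off from (\ref{eq:bminflip}), is that $\RecCMin{\pii}{k}$ consists of $m = kn$ consecutive blocks, where the $i$-th block is $\RecCMin{\mathbf{\rho(\pi)_i}}{k} \cdot r_i$; every permutation in a single block shares the same final symbol $r_i$. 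Thus the last symbol $q_n$ determines which block $\pii_2$ lies in, and within that block the prefix $q_1 \cdots q_{n-1}$ is a permutation appearing in the sub-listing $\RecCMin{\mathbf{\rho(\pi)_i}}{k}$, whose first element is $\mathbf{\rho(\pi)_i}$ and last element is $\rev{(\mathbf{\rho(\pi)_i})^{+(k-1)}}$ by Lemma~\ref{lem:BminFL}.

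The main case split I expect is between an \emph{internal} successor (where $\pii_2$ is not the last permutation of its block) and a \emph{block-boundary} successor (where $\pii_2$ \emph{is} the last permutation of its block and the flip of length $n$ carries us to the next block). For the internal case, the successor is governed entirely by the flip-sequence $\SEQQ^k_n$ restricted to the sub-listing on the prefix, so the flip length is $<n$ and I would apply the inductive hypothesis to the length-$(n-1)$ sub-problem: the successor flip length of $\pii_2$ equals the length of the longest decreasing prefix of $q_1 \cdots q_{n-1}$, which I must argue coincides with $j$ as defined for the full permutation. For the boundary case, I would show that $\pii_2$ being the final permutation of its block is \emph{exactly} the condition that its entire length-$n$ prefix $q_1 \cdots q_n$ is decreasing (so $j = n$), using the characterization of the last pre-perm as $\rev{(\mathbf{\rho(\pi)_i})^{+(k-1)}}$ together with the fact that $\mathbf{\rho(\pi)_i}$, being a subword of the increasing circular string $\rho(\pii)$, is increasing — hence its reverse-and-shift is decreasing, and appending $r_i$ keeps the whole thing decreasing precisely because of the wraparound relation $r_{i-(n+1)} = r_i^{+1}$ used in the proof of Lemma~\ref{lem:BminR}.

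The crux — and the step I expect to be the main obstacle — is verifying that the notion of ``longest decreasing prefix'' behaves correctly across the block boundary, i.e. that appending the block's final symbol $q_n = r_i$ to a maximal decreasing prefix of the sub-permutation either extends it (exactly when we are at the block's last element) or does not (otherwise). This requires a careful analysis of when a decreasing pre-perm in $\CPERMS{n}{k}$ can be extended by one more symbol on the right while remaining a subsequence of a single decreasing permutation; the colour-increment modular arithmetic makes this more delicate than the $k=1,2$ cases, since ``decreasing'' here means matching a reversal of a window of the circular string $\rho(1^0 2^0 \cdots n^0)$ rather than a simple numeric descent. I would isolate this as a self-contained combinatorial sub-claim about increasing and decreasing pre-perms, prove it directly from the definition of $\rho$, and then feed it into both cases of the induction. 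Once that sub-claim is in hand, the two cases assemble cleanly: in each, the prescribed flip length $j$ matches the flip dictated by the position of $\pii_2$ in $\RecCMin{\pii}{k}$, which by Lemma~\ref{lem:BminR} is the correct successor flip, giving $\suc{\pii_2} = \flipper{\pii_2}{j}$.
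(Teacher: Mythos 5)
Your proposal follows essentially the same route as the paper's proof: induction on $n$, reduction to the length-$n$ flip case via the block decomposition of $\RecCMin{\pii}{k}$ into the sub-listings $\RecCMin{\mathbf{\rho(\pi)_i}}{k}\cdot r_i$, identification of each block's last permutation via Lemma~\ref{lem:BminFL} as $\rev{\mathbf{s}}\cdot r_i$ with $\mathbf{s}$ increasing, and the wraparound relation in $\rho(\pii)$ to show this is the (unique) decreasing permutation in the block. The ``crux'' you flag about extending a decreasing prefix across the block boundary is exactly the point the paper settles by observing that at most one permutation per block is decreasing and that the block's last permutation is decreasing, so your plan is sound and aligned with the published argument.
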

\begin{proof}
By induction on $n$. When $n=1$, the result follows trivially as only flips of length $1$ can be applied. Now, for $n>1$, we focus on the  permutations whose successor is the result of a flip of length $n$ and the result will follow inductively by the recursive definition of $\RecCMin{\pii}{k}$.
By Lemma~\ref{lem:BminR}, the successor of $\pii_2$ will
be $\flipper{\pii_2}{n}$ if and only if it is the last  permutation in one of the recursive listings of the form $\RecCMin{\rho(\pii)_{\mathbf{i}}}{k} \cdot r_i$. Recall that $r_i$ is the $i$-th element in $\rho(\pii)$ when indexed from $r_1=p_1^{+(k-1)}$ to $r_m=p_n$. As it is clear that at most one  permutation is decreasing in each recursive listing, it suffices to show that the last  permutation in each listing is decreasing to prove the successor rule holds for flips of length $n$. By Lemma~\ref{lem:BminFL}, the last permutation in $\RecCMin{\rho(\pii)_{\mathbf{i}}}{k}\cdot r_i$ is $\rev{\mathbf{s}}\cdot r_i$ where $\mathbf{s}=\rho(\pii)_{\mathbf{i}}^{\mathbf{+(k-1)}}$. Since $\pii$ is increasing, it is clear that $\rho(\pii)_{\mathbf{i}}$ is increasing and therefore that $\mathbf{s}$ is increasing. Hence, $\rev{\mathbf{s}}$ is decreasing by definition. Furthermore, by the definition of the circular word $\rho(\pii)$, the element immediately before $r_{i-1-(n-1)}^{+(k-1)}$ in $\rho(\pii)$ is $r_i$ (note the subscript $i-1-(n-1)$ is considered modulo $nk$ here). Therefore,  $\rev{\mathbf{s}}\cdot r_i$ is decreasing. Therefore, the successor rule holds for flips of length $n$ and thus for flips of all lengths by induction. \hfill $\Box$
\end{proof}
\noindent
\begin{exam}
With respect to the listing $\RecCMin{1^0 2^0 3^0 4^0 5^0 6^0}{10}$, $$\suc{3^8 2^8 5^9 4^9 1^7 6^3}=\flip{4}(3^8 2^8 5^9 4^9 1^7 6^3)=4^0 5^0 2^9 3^9  1^7 6^3$$ and $$\suc{1^83^72^65^54^36^2}=\flip{1}(1^83^72^65^54^36^2)=1^93^72^65^54^36^2.$$ 
\end{exam}

\noindent
By applying the previous lemma,  computing $\suc{\pii_2}$ for a permutation in the listing  $\RecCMin{\pii}{k}$
can easily be done in $O(n)$-time as described in the pseudocode given in Algorithm~\ref{alg:minsuccessor}.

\begin{algorithm}[h]
  \caption{Computing the successor of $\pii$ in the listing $\RecCMin{1^02^0\cdots n^0}{k}$}
  \label{alg:minsuccessor}
  \small
  \begin{algorithmic}[1]

\Function{Successor}{$\pii$}  

	\State $incr \gets 0$ 
	\For{ $j\gets 1$ {\bf to} $n-1$} 
		\If { $v_j < v_{j+1}$ } \  $incr \gets incr+1$  \EndIf
		\If  {$incr = 2$ {\bf or} ($incr = 1$ {\bf and} $v_{j+1} < v_1$)}  \  \Return $\flip{j}(\pii)$  \EndIf

		
		\If  {$k > 1$ {\bf and} $v_j < v_{j+1}$ {\bf and} ( ($c_{j+1} - c_j + k) \bmod k \neq 1$)}  \  \Return $\flip{j}(\pii)$  \EndIf
		\If  {$k > 1$ {\bf and} $v_j > v_{j+1}$ {\bf and} (  $c_{j} \neq c_{j+1}$)}  \  \Return $\flip{j}(\pii)$  \EndIf

	\EndFor
	\State \Return $\flip{n}(\pi)$
\EndFunction
    
  \end{algorithmic}
\end{algorithm}

\begin{theorem}
{\sc Successor}($\pii$) returns the length of the flip required to obtain the successor of  $\pii$ in the listing $\RecCMin{1^02^0\cdots n^0}{k}$ in $O(n)$-time.
\end{theorem}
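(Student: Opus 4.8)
The plan is to reduce the theorem to Lemma~\ref{lem:Bminsucc} and then check that the single loop of Algorithm~\ref{alg:minsuccessor} computes exactly the quantity that lemma requires. First I would invoke Corollary~\ref{cor:Bmin}, which identifies $\RecCMin{1^02^0\cdots n^0}{k}$ with $\GreedyCMin{n}{k}$, together with the fact that $1^02^0\cdots n^0$ is increasing; Lemma~\ref{lem:Bminsucc} then says that the successor of $\pii=v_1^{c_1}\cdots v_n^{c_n}$ is $\flip{j}(\pii)$, where $j$ is the length of the longest \emph{decreasing} prefix of $\pii$. Hence it suffices to prove (a) the flip length returned by {\sc Successor}$(\pii)$ is precisely this $j$, and (b) the procedure runs in $O(n)$-time. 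Claim (b) is immediate, since the routine makes one pass $j=1,\ldots,n-1$ doing a constant number of comparisons and modular arithmetic operations per iteration.

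The heart of the argument is a purely local characterization of decreasing pre-perms. By definition the increasing permutations are the length-$n$ windows of the circular word $\rho(1^02^0\cdots n^0)=1^{k-1}2^{k-1}\cdots n^{k-1}\cdots 1^{0}2^{0}\cdots n^{0}$, so a window starting at value $a$ with colour $c$ consists of $a,a{+}1,\ldots,n$ coloured $c$ followed by $1,2,\ldots,a{-}1$ coloured $(c{-}1)\bmod k$. Reversing and passing to subsequences, I would show that a prefix $v_1^{c_1}\cdots v_\ell^{c_\ell}$ is decreasing if and only if: (i) it has at most one \emph{ascent}, i.e.\ index $i$ with $v_i<v_{i+1}$; (ii) at every \emph{descent} ($v_i>v_{i+1}$) the colours agree, $c_i=c_{i+1}$; (iii) at the unique ascent, if present, the colour increments, $(c_{i+1}-c_i)\bmod k=1$; and (iv) every element after the ascent has value exceeding $v_1$. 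Conditions (i)--(iii) encode that the reversed prefix is one increasing run, then a single ``wrap'' where the value drops and the colour decreases, then a second increasing run; condition (iv) is exactly the requirement that a cut value $a$ with $v_1<a\le\min$ of the post-ascent values exists, so that the pre-ascent values lie in $\{1,\ldots,a{-}1\}$ and the post-ascent values in $\{a,\ldots,n\}$, witnessing that the reversed prefix embeds into a single increasing window with colour $c=(c_1{+}1)\bmod k$ and start $a$.

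Finally I would match each clause of the loop to this characterization. Maintaining $\mathit{incr}$ as the number of ascents seen so far, at step $j$ the algorithm decides whether the prefix can be extended from length $j$ to length $j+1$ by inspecting the pair $(v_j^{c_j},v_{j+1}^{c_{j+1}})$: a second ascent ($\mathit{incr}=2$) violates (i); a post-ascent value with $v_{j+1}<v_1$ (the test $\mathit{incr}=1$ and $v_{j+1}<v_1$) violates (iv); an ascent whose colour fails to increment violates (iii); and a descent with $c_j\ne c_{j+1}$ violates (ii). At the first violated clause the loop returns $\flip{j}(\pii)$, so the longest decreasing prefix has length exactly $j$; if no clause ever fires, (i)--(iv) hold for the whole permutation, it is decreasing, and the routine returns $\flip{n}(\pii)$, again matching Lemma~\ref{lem:Bminsucc}. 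For $k=1$ the colour clauses (ii)--(iii) are vacuous, recovering the ordinary permutation rule.

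The step I expect to be the main obstacle is establishing the equivalence in condition (iv): I must confirm that scanning the post-ascent elements one at a time and rejecting as soon as some value falls below $v_1$ is equivalent to the \emph{global} existence of a valid cut $a$. This holds because the post-ascent run is itself decreasing, so its minimum is attained at the most recently examined element, making the incremental test faithful to the global condition. I would also verify that the ordering of the tests (clause (iv) checked before the colour clauses) is harmless, since any single violation already truncates the prefix at length $j$ and so cannot change the returned value.
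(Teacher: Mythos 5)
Your proposal is correct and follows the route the paper intends: the paper states this theorem without a formal proof, remarking only that it follows ``by applying the previous lemma,'' i.e.\ Lemma~\ref{lem:Bminsucc} applied to the increasing starting permutation $1^02^0\cdots n^0$, and your argument supplies exactly the verification that is left implicit. In particular, your local characterization of decreasing prefixes (at most one ascent, equal colours at every descent, colour incrementing by $1 \bmod k$ at the unique ascent, and all post-ascent values exceeding $v_1$ --- testable incrementally because the post-ascent run is itself decreasing) is the correct translation of the paper's definition of ``decreasing'' into the clauses of Algorithm~\ref{alg:minsuccessor}, including the $k=1$ guard on the colour tests.
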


Though the worst case performance of  {\sc Successor}($\pii$) is $O(n)$-time, on average it is much better.
Let $\overline{\SEQQ}_{n}^{k}$ denote $(\SEQQ_n^{k},n)$, i.e. the sequence of flips used to generate the listing $\RecCMin{\pii}{k}$ with an extra flip of length $n$ at the end to return to the starting  permutation. Our goal is to determine the average flip length of $\overline{\SEQQ}_{n}^{k}$, denoted $avg(n,k)$. Note that our analysis generalize the results for $avg(n,1)$ \cite{zaks} and $avg(n,2)$ \cite{greedyPancakes}.  

\begin{lemma}\label{lem:avg}
For $n\ge 1$ and $k\ge 1$, $$avg(n,k)  =    \sum_{j=0}^{n-1}  \frac{1}{k^{j}  j!}.$$ Moreover, $avg(n,k)<\sqrt[k]{e}$.
\end{lemma}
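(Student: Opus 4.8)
The plan is to exploit the recursive structure of the flip-sequence to collapse $avg(n,k)$ into a one-term recurrence, and then recognize the closed form as a truncated exponential series. First I would rewrite the extended flip-sequence $\overline{\SEQQ}_{n}^{k} = (\SEQQ_{n}^{k}, n)$ in a more symmetric form. Substituting the recurrence~(\ref{eq:seq}) for $\SEQQ_{n}^{k}$ gives
\[
\overline{\SEQQ}_{n}^{k} = (\SEQQ_{n-1}^{k}, n)^{kn-1},\, \SEQQ_{n-1}^{k},\, n = (\SEQQ_{n-1}^{k}, n)^{kn},
\]
so the appended flip of length $n$ exactly completes a $kn$-th block. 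Thus $\overline{\SEQQ}_{n}^{k}$ consists of $kn$ identical blocks, each equal to $\SEQQ_{n-1}^{k}$ followed by a single flip of length $n$.

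Next I would track two quantities: the number of flips $\overline{N}(n,k)$ and the total sum of flip lengths $\overline{S}(n,k)$ in $\overline{\SEQQ}_{n}^{k}$, so that $avg(n,k) = \overline{S}(n,k)/\overline{N}(n,k)$. From Theorem~\ref{thm:BminGray} the sequence $\SEQQ_{n}^{k}$ has length $k^{n}n!-1$, so $\overline{N}(n,k) = k^{n}n!$; this also follows from the block decomposition, since each block contributes $k^{n-1}(n-1)!$ flips. For the sum, each block contributes $S(n-1,k)+n$, where $S(n-1,k)$ denotes the flip-length sum of $\SEQQ_{n-1}^{k}$, giving $\overline{S}(n,k) = kn\,(S(n-1,k)+n)$. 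Writing $S(n-1,k)+n = \overline{S}(n-1,k)+1$ turns this into the clean recurrence $\overline{S}(n,k) = kn\,(\overline{S}(n-1,k)+1)$.

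Dividing by $\overline{N}(n,k) = kn\cdot\overline{N}(n-1,k)$ then collapses the recurrence to
\[
avg(n,k) = avg(n-1,k) + \frac{1}{k^{n-1}(n-1)!},
\]
with base case $avg(1,k)=1$, since $\overline{\SEQQ}_{1}^{k} = 1^{k}$ uses exactly $k$ flips of length $1$. Telescoping this identity yields $avg(n,k) = \sum_{j=0}^{n-1} 1/(k^{j}j!)$, the claimed formula. Finally, because every omitted term $1/(k^{j}j!)$ for $j\ge n$ is strictly positive, the finite sum is strictly dominated by $\sum_{j=0}^{\infty} (1/k)^{j}/j! = e^{1/k} = \sqrt[k]{e}$, which gives $avg(n,k) < \sqrt[k]{e}$.

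The main obstacle I anticipate is purely bookkeeping: correctly accounting for the single appended flip of length $n$ and the off-by-one between the number of $\SEQQ_{n-1}^{k}$ blocks ($kn$) and the number of flips of length $n$ ($kn-1$) in~(\ref{eq:seq}). The substitution $S(n-1,k)+n = \overline{S}(n-1,k)+1$ is the crucial simplification that makes the per-flip average telescope; once the $kn$-fold block decomposition of $\overline{\SEQQ}_{n}^{k}$ is in place, everything else is routine.
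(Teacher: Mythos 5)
Your proof is correct and follows essentially the same route as the paper: both decompose $\overline{\SEQQ}_{n}^{k}$ into $kn$ identical blocks of the form $(\SEQQ_{n-1}^{k}, n)$, derive the recurrence $avg(n,k)=avg(n-1,k)+\frac{1}{k^{n-1}(n-1)!}$, telescope from $avg(1,k)=1$, and bound the resulting partial sum by the Maclaurin series for $e^{1/k}$. Your bookkeeping via $\overline{S}$ and $\overline{N}$ is just a slightly more explicit rendering of the paper's one-line computation.
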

\begin{proof}
By definition of $\SEQQ_{n}^k$, it is not difficult to see that $\overline{\SEQQ}_{n+1}^k$ is equivalent to the concatenation of $(n+1)k$ copies of $\overline{\SEQQ}_{n}^k$ with  the last element in every copy of $\overline{\SEQQ}_{n}^k$ incremented by $1$. Therefore, we have 

\begin{align*}
avg(n+1,k)&=\frac{\left(1+\sum\limits_{f\in \overline{\SEQQ}_n^k}f\right)(n+1)k } {(n+1)!k^{n+1}}\\
&= \frac{\sum\limits_{f\in \overline{\SEQQ}_n^k}f } {n!k^{n}}+\frac{1} {n!k^{n}}\\
&= avg(n,k)+\frac{1} {n!k^{n}}.
\end{align*}
\noindent Hence, with the trivial base case that $avg(1,k)=1$, we have

%
\begin{eqnarray*} 
avg(n,k) & = &   \sum_{j=0}^{n-1}  \frac{1}{k^{j}  j!}.
\end{eqnarray*}
%
Therefore, $$avg(n,k)<\sum_{j=0}^{\infty}  \frac{1}{k^{j}  j!}=\sqrt[k]{e}$$
\noindent by applying the well-known Maclaurin series expansion for $e^{x}$. \hfill $\Box$
\end{proof}

\noindent
Observe that the {\sc Successor} function runs in expected $O(1)$-time when the permutation is passed by reference because the average flip length is bounded above by the constant $\sqrt[k]{e}$ as proved in Lemma~\ref{lem:avg}.  Thus, by repeatedly applying the successor rule, we obtain a CAT algorithm for generating  $\RecCMin{1^02^0\cdots n^0}{k}$.


\subsection{Loop-free Generation of the Flip-Sequence $\SEQQ^k_{n}$}

Based on the recursive definition of the flip-sequence  $\SEQQ^k_{n}$ given in \eqref{eq:seq},  Algorithm~\ref{alg:iter} will generate $\SEQQ^k_{n}$ in a loop-free manner.  The algorithm generalizes a similar algorithm presented by Zaks for non-coloured permutations~\cite{zaks}.  The next flip length $x$ is computed using an array of counters $c_1,c_2,\ldots ,c_{n+1}$ initialized to 0, and an array of flip lengths $f_1,f_2, \ldots , f_{n+1}$ with each $f_i$ initialized to $i$.  For a formal proof of correctness, we invite the readers to see the simple inductive proof for the non-coloured case in~\cite{zaks}, and note the primary changes required to generalize to coloured permutations are in handling of the minimum allowable flip lengths (when $k=1$, the smallest allowable flip length is 2) corresponding to lines 5-6 and adding a factor of $k$ to line 8.  
%
\begin{algorithm}[h]
  \caption{Loop-free generation of the flip-sequence $\SEQQ^k_{n}$ }
  \label{alg:iter}
    \small
  \begin{algorithmic}[1]

    \Procedure{FlipSeq}{} 
    
    		\State $c_1,c_2, \ldots ,c_{n+1} \gets 0,0,\ldots , 0$
		\State $f_1,f_2, \ldots , f_{n+1} \gets 1,2, \ldots , n+1$

	\Repeat


		  \If{$k = 1$}   $x \gets f_{2}$;  \ \   $f_{2} \gets 2$
		\Else \ \ $x \gets f_{1}$;  \ \   $f_{1} \gets 1$
		\EndIf
 		 \State $c_x \gets c_x + 1$ 
  		\If{ $c_x = kx{-}1$ } 
   			 \State $c_x \gets 0$
   			 \State $f_x \gets f_{x+1}$
   			 \State $f_{x+1} \gets x+1$
 		 \EndIf 
  		\State \Call{Output}{$x$}
		
  
	\Until $x > n$ 
    \EndProcedure
  \end{algorithmic}
\end{algorithm}

\vspace{-0.2in}

\begin{theorem}
The algorithm  {\sc FlipSeq} is a loop-free algorithm to generate the flip-sequence  $\SEQQ^k_{n}$ one element at a time.
\end{theorem}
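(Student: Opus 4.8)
The plan is to separate the two assertions in the statement—that the procedure runs loop-free, and that it emits exactly $\SEQQ^k_n$—and to dispatch the first by inspection. First I would observe that a single pass through the body of the Repeat loop reads one array cell to set $x$, overwrites one cell, increments the single counter $c_x$, and then executes a fixed, constant-size block of assignments before calling Output; there is no inner loop and no search, so the time between two successive emitted values is worst-case $O(1)$. Termination, together with the fact that the sentinel value $n+1$ (which is not itself a member of $\SEQQ^k_n$) is what trips the guard $x>n$, will fall out of the correctness analysis below.

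The real content is that the emitted values, up to that terminal $n+1$, form $\SEQQ^k_n$. I would prove this by induction on $m$ against the recursion \eqref{eq:seq}, following Zaks' argument for the uncoloured case \cite{zaks} and accounting for the two modifications the text highlights: reading $f_2$ rather than $f_1$ when $k=1$, and the carry threshold $c_x=kx-1$ that carries the extra factor of $k$. The inductive engine is a state-transition lemma: if at the start of some iteration $f_i=i$ and $c_i=0$ for every $1\le i\le m$ and $f_{m+1}=\gamma$, then the next $k^m m!-1$ outputs are exactly $\SEQQ^k_m$, after which $f_1=\gamma$, $f_i=i$ for $2\le i\le m$, $f_{m+1}=m+1$, $c_i=0$ for every $1\le i\le m$, and all cells beyond position $m+1$ (and the counter $c_{m+1}$) are untouched. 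The base case $m=1$ is a direct count: for $k>1$, position $1$ fires $k-1$ times, emitting $1^{k-1}=\SEQQ^k_1$, and the single carry on the last firing performs $f_1\gets\gamma$ and $f_2\gets 2$; for $k=1$ the read of $f_2$ makes $\SEQQ^1_1$ empty and shifts the base of the recursion to position $2$, recovering Zaks' algorithm verbatim, so that case can lean on \cite{zaks}.

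For the inductive step I would unfold $\SEQQ^k_m=(\SEQQ^k_{m-1},m)^{km-1},\SEQQ^k_{m-1}$ into its $km$ sub-blocks and apply the hypothesis to each, with the incoming boundary value supplied by the current contents of $f_m$. Each sub-block restores positions $1,\dots,m-1$ and loads $f_1$ with the old value of $f_m$; for the first $km-1$ sub-blocks that value is $m$, so the ensuing read emits the flip $m$ and advances $c_m$, realising the $km-1$ interspersed copies of $m$. The step I expect to be the main obstacle is verifying the hand-off at the top level: on the $(km-1)$-th emission of $m$ the counter $c_m$ attains $km-1$, the carry rewrites $f_m\gets\gamma$ and $f_{m+1}\gets m+1$, and I must confirm that the single trailing $\SEQQ^k_{m-1}$ block then transports this $\gamma$ all the way down into $f_1$ (again by the hypothesis, now with $f_m$ as the incoming boundary) while resetting $f_m$ to $m$ and leaving $c_{m+1}$ alone. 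Checking that the $km$ sub-blocks and the lone level-$m$ carry interleave in exactly the order dictated by \eqref{eq:seq}—so that $c_m$ reaches its threshold neither too early nor too late—is where the bookkeeping is most delicate. Once the lemma is established, instantiating it at $m=n$ from the initial configuration (where $\gamma=f_{n+1}=n+1$) shows the procedure emits $\SEQQ^k_n$; the next read then returns $f_1=\gamma=n+1$, which is output as the sentinel and halts the loop, completing the proof.
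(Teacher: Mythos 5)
Your argument is correct and is essentially the proof the paper has in mind: the paper itself omits the details, deferring to Zaks's inductive proof for the uncoloured case and merely noting the two modifications (the $f_2$ read when $k=1$ on lines 5--6 and the carry threshold $kx-1$ on line 8), which is precisely the induction you carry out in full via your state-transition lemma. The bookkeeping in your inductive step --- the $km$ sub-blocks, the $km-1$ interspersed emissions of $m$ driven by $c_m$, and the hand-off of $\gamma$ from $f_{m+1}$ through $f_m$ into $f_1$ on the final sub-block --- checks out, as does the terminal emission of the sentinel $n+1$.
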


Since the average flip length in $\SEQQ^k_{n}$  is bounded by a constant, as determined in the previous subsection, Algorithm~\ref{alg:iter} can be modified to generate $\RecCMin{\pii}{k}$ by passing the initial permutation $\pii$ as a parameter, outputting $\pii$ at the start of the {\bf repeat} loop,  and updating 
$\pii \gets \flip{x}(\pii)$ at the end of the loop instead of outputting the flip length.

\begin{corollary}
The algorithm {\sc FlipSeq}  can be modified to generate successive permutations in the listing $\RecCMin{\pii}{k}$  in $O(1)$-amortized time.
\end{corollary}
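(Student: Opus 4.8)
The plan is to adopt the modification already sketched immediately before the corollary: we pass the initial permutation $\pii$ as a parameter to {\sc FlipSeq}, emit $\pii$ at the top of the {\bf repeat} loop, and replace the output of the flip length $x$ with the update $\pii \gets \flip{x}(\pii)$ at the bottom of the loop. The first step is to argue correctness. By the preceding theorem, the sequence of values $x$ produced by {\sc FlipSeq} is exactly $\SEQQ^k_n$, and by Lemma~\ref{lem:BminR} this is precisely the flip-sequence of $\RecCMin{\pii}{k}$. Hence, starting from $\pii$ and successively applying $\flip{x}(\pii)$ in this order visits exactly the permutations of $\RecCMin{\pii}{k}$ in order, so the modified routine emits the desired listing.

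The remaining work is the time analysis, and here I would split each loop iteration into two parts: the bookkeeping that computes $x$ (updating the counter array $c$ and the flip-length array $f$) and the application of the flip itself. The bookkeeping is worst-case $O(1)$ per iteration by the loop-free property established above. The flip $\flip{x}(\pii)$, however, reverses and recolours a prefix of length $x$ and therefore costs $\Theta(x)$ time when $\pii$ is stored in an array or linked list. Thus a single iteration is \emph{not} $O(1)$ in the worst case---a flip of length $n$ costs $\Theta(n)$---and this is exactly why the statement claims an amortized, rather than a loop-free, bound.

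To complete the argument I would bound the total flip cost over the whole listing. The $k^n n!$ permutations are produced using the $k^n n!-1$ flips of $\SEQQ^k_n$ (bounded above by the $k^n n!$ flips of $\overline{\SEQQ}_n^k$), and the total time spent on flips is the sum of their lengths, which is $avg(n,k)\cdot k^n n!$ by the definition of $avg(n,k)$. By Lemma~\ref{lem:avg}, $avg(n,k)<\sqrt[k]{e}\le e$, a constant independent of both $n$ and $k$. Hence the cumulative flip cost is $O(k^n n!)$, and adding the $O(1)$ per-iteration bookkeeping keeps the total at $O(k^n n!)$; dividing by the $k^n n!$ permutations generated yields $O(1)$-amortized time per permutation.

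The one subtlety worth flagging as the crux of the argument is that the strong loop-free guarantee of {\sc FlipSeq} does \emph{not} survive the modification, so the entire amortized bound rests on the constant average flip length from Lemma~\ref{lem:avg}. Without that bound the per-permutation cost could be as large as $\Theta(n)$, and the amortization would fail; it is precisely the min-flip strategy's guarantee of a bounded average flip length that converts a sequence of individually expensive flips into $O(1)$-amortized work per object.
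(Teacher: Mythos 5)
Your proposal is correct and follows the same route as the paper: the paper justifies the corollary by exactly the modification you describe (pass $\pii$ in, output it at the top of the loop, apply $\flip{x}(\pii)$ at the bottom) together with the constant bound on the average flip length from Lemma~\ref{lem:avg}. Your write-up merely makes explicit the amortization over the $k^n n!$ permutations and the observation that the loop-free guarantee is lost, both of which the paper leaves implicit.
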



\section{Summary and Related Work}
\label{sec:summary}

We presented four different combinatorial algorithms for traversing a specific Hamilton cycle in the $k$-sided pancake network.
The Hamilton cycle corresponds to a flip Gray code listing of coloured permutations.  Given such combinatorial listings, it is desirable to have associated ranking and unranking algorithms.  Based on the recursive description of the listing in \eqref{eq:bminflip}, such algorithms are relatively straightforward to derive and implement in $O(n^2)$-time.  Details are outlined in Appendix~\ref{app:rank}, along with a C implementation of our algorithms in Appendix~\ref{app:code}.

\bibliographystyle{abbrv}
\bibliography{refs}

\newpage
\appendix

\section{Ranking and unranking} \label{app:rank}
	
\subsection{Efficient Ranking}

Let $\RANK{\pii}$ denote the rank of permutation $\pii =p_1p_2\cdots p_n \in \CPERMS{n}{k}$ in the listing $\RecCMin{1^02^0\cdots n^0}{k}$.  The rank of $\pii$ can be computed by considering the recursive structure of this listing given in \eqref{eq:bminflip}.  Each of the $nk$ recursive sub-lists contain $k^{n-1} \cdot (n-1)!$ permutations and there are $n(c_n+1) - p_n$ sub-lists that appear before the sub-list containing all permutations ending with $p_n$.  
Since each recursive sub-list follows the same structure, using appropriate relabelling we map $p_1p_2\cdots p_{n-1}$ to the permutation $q_1q_2\cdots q_n$ that appears at the same rank in the respective sub-list starting with $1^02^0\cdots (n{-}1)^0$.   In other words $\pii$ appears at rank $(n(c_n+1)-v_n) \cdot k^{n-1} \cdot (n-1)! + \RANK{q_1q_2\cdots q_{n-1}}$.  Formally:

\small
\begin{equation*}  \label{eq:rank} 
\RANK{p_1p_2\cdots p_n} =
\begin{cases}
c_i+1 & \text{ if $n=1$;} \\
  (n(c_n{+}1){-}p_n) \cdot k^{n-1} (n-1)!  \ + \ \RANK{q_1q_2\cdots q_{n-1}}  &  \text{ otherwise}, \\
\end{cases}  
\end{equation*}  \normalsize
where $q_i = (v'_i,c'_i$) and $v'_i  = (v_i - v_n) \bmod n$ and $c'_i = (c_i-c_t) \bmod k$ if $v_i < v_n$ and $c'_i = (c_i-c_t-1) \bmod k$ otherwise.   
%

\begin{exam} \small
Consider the listing $\RecCMin{ \C{1}{1}\C{2}{1}\C{3}{1}}{3}$ and the coloured permutation $\pii = \C{1}{1}\C{3}{2}\C{2}{3}$.    Notice from Example~\ref{exam:greedy} that  $\C{1}{1}\C{3}{2}\C{2}{3}$ is the 12th permutation in the sub-list of permutations ending with $\C{2}{3}$.    Similarly $\C{2}{2}\C{1}{2}\C{3}{1}$ is the 12th permutation in the first sublist ending with $\C{3}{1}$.  The first permutation in the listing that ends with $\C{2}{3}$ is $\C{3}{1}\C{1}{3}\C{2}{3}$.  By relabelling 
this permutation to correspond to $\C{1}{1}\C{2}{1}\C{3}{1}$ we consider the shifted sequence $r_1\cdots r_{27}$ to determine how to relabel each element $q_i$. 

\begin{center}
\begin{tabular}{c|| c | c | c | c | c | c | c | c  |  c  } 
$r_1r_2\cdots r_{27}$ & \C{1}{1} \C{2}{1} \C{3}{1} & \C{3}{2} \C{1}{1} \C{2}{1} &\C{2}{2} \C{3}{2} \C{1}{1} & \C{1}{2} \C{2}{2} \C{3}{2} &\C{3}{3} \C{1}{2} \C{2}{2} & \C{2}{3} \C{3}{3} \C{1}{2} & \C{1}{3} \C{2}{3} \C{3}{3} & \C{3}{1} \C{1}{3} \C{2}{3} & \C{2}{1} \C{3}{1} \C{1}{3} \\ \hline
shifted $r_1r_2\cdots r_{27}$  &\C{2}{2} \C{3}{2} \C{1}{1} & \C{1}{2} \C{2}{2} \C{3}{2} &\C{3}{3} \C{1}{2} \C{2}{2} & \C{2}{3} \C{3}{3} \C{1}{2} & \C{1}{3} \C{2}{3} \C{3}{3} & \C{3}{1} \C{1}{3} \C{2}{3} & \C{2}{1} \C{3}{1} \C{1}{3} & \C{1}{1} \C{2}{1} \C{3}{1} & \C{3}{2} \C{1}{1} \C{2}{1}  \\
\end{tabular}
\end{center}
 It is not difficult to see how this shifting produces the mapping to the $q_i$.  From this table:
 $p_1 = \C{1}{1}$  maps to $q_1 = \C{2}{2}$, and 
 $p_2 = \C{3}{2}$  maps to $q_2 = \C{1}{2}$. 
The recursive decomposition for computing $\RANK{ \C{1}{1}\C{3}{2}\C{2}{3} }$ in the ordering $\RecCMin{ \C{1}{1}\C{2}{1}\C{3}{1}}{3}$ is as follows:
\begin{eqnarray*} 
\RANK{ \C{1}{1}\C{3}{2}\C{2}{3} } & = & (3 \cdot 3 - 2) \cdot 3^2\cdot 2!  + \RANK{  \C{2}{2}\C{1}{2} } \\
  				& = & 126 +  (2 \cdot 2 - 1) \cdot 3^1\cdot 1! +  \RANK{  \C{1}{3} } \\
  				& = & 126 + 9 + 3 =   138 
\end{eqnarray*}
\end{exam}

\noindent
To compute $\RANK{p_1p_2\cdots p_n}$ requires $n$ recursive calls, where $O(n)$ work is required at each call.

\begin{theorem}
The algorithm $\RANK{\pii}$ returns the rank of the  permutation $\pii$ in the listing  $\RecCMin{1^02^0\cdots n^0}{k}$  in $O(n^2)$ time.
\end{theorem}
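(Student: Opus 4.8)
The plan is to establish correctness by induction on $n$ and then bound the running time, both arguments resting on the decomposition \eqref{eq:bminflip} of $\RecCMin{1^02^0\cdots n^0}{k}$ into the $kn$ consecutive sub-lists $\RecCMin{\rho(\mathbf{p})_i}{k}\cdot r_i$, read in the order $i=m,m{-}1,\ldots,1$ with $m=kn$. For the base case $n=1$, the listing is $\RecCMin{p_1}{k}=p_1^{+0},p_1^{+1},\ldots,p_1^{+(k-1)}$, so $p_1=v_1^{c_1}$ occupies position $c_1+1$, matching the formula. In the inductive step I would fix $\pii=p_1\cdots p_n$ and verify the two summands of the recurrence separately.

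For the leading summand, each sub-list is an instance of $\RecCMin{\cdot}{k}$ on a length-$(n{-}1)$ pre-perm, hence contains $k^{n-1}(n-1)!$ entries (immediate from \eqref{eq:bminflip} by induction, or from Lemma~\ref{lem:BminR} and Theorem~\ref{thm:BminGray}, since $|\SEQQ^k_{n-1}|=k^{n-1}(n-1)!-1$). I then locate the unique sub-list containing $\pii$, namely the one ending in $p_n=v_n^{c_n}$: in the circular word $\rho(1^02^0\cdots n^0)=1^{k-1}\cdots n^{k-1}\,1^{k-2}\cdots n^{k-2}\cdots 1^0\cdots n^0$ the symbol $v_n^{c_n}$ is $r_i$ with $i=(k-1-c_n)n+v_n$. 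Because the sub-lists are read with index decreasing from $m=kn$, exactly $kn-i=n(c_n+1)-v_n$ complete sub-lists precede it, contributing $(n(c_n+1)-v_n)\cdot k^{n-1}(n-1)!$ to the rank.

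For the trailing summand I must show that the rank of $\pii$ within its sub-list (counted from $1$) equals $\RANK{q_1\cdots q_{n-1}}$ in the standard length-$(n{-}1)$ listing. The crux is that the relabelling $\phi\colon v^c\mapsto q$ given by $v'=(v-v_n)\bmod n$ together with the stated colour shift is \emph{flip-equivariant}: since $\phi$ subtracts from $c$ a constant that depends only on the value $v$ (namely $c_n$ when $v<v_n$ and $c_n{+}1$ when $v>v_n$), it commutes with colour increment, and as it acts tokenwise it commutes with prefix-reversal; hence $\phi$ commutes with every flip. Because both $\RecCMin{\rho(\mathbf{p})_i}{k}$ and $\RecCMin{1^0\cdots(n-1)^0}{k}$ are generated by the \emph{same} flip-sequence $\SEQQ^k_{n-1}$ (Lemma~\ref{lem:BminR}), applying $\phi$ to the former produces the latter entry-by-entry, so $\phi$ is rank-preserving. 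It remains to check that $\phi$ sends the first pre-perm of the sub-list to the standard start: computing $\rho(\mathbf{p})_i=(v_n{+}1)^{c_n+1}\cdots n^{c_n+1}\,1^{c_n}\cdots(v_n{-}1)^{c_n}$ directly from the definition of $\rho$ (with indices taken mod $kn$) and applying $\phi$ yields $1^02^0\cdots(n-1)^0$, so the position of $p_1\cdots p_{n-1}$ in the sub-list is $\RANK{\phi(p_1\cdots p_{n-1})}=\RANK{q_1\cdots q_{n-1}}$ by the inductive hypothesis.

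I expect the flip-equivariance of this value-dependent colour shift to be the main obstacle, precisely because $\phi$ does not shift all colours by one constant; the point to pin down is that a per-token colour shift still commutes with colour increment, and that the two cases ($v_i<v_n$ versus $v_i>v_n$) are exactly those forced by the explicit form of $\rho(\mathbf{p})_i$. For the running time, the recursion has depth $n$ (the pre-perm length drops by one per call), and each call performs the location of $p_n$, the arithmetic for the multiplier $(n(c_n+1)-v_n)\cdot k^{n-1}(n-1)!$, and the construction of $q_1\cdots q_{n-1}$, all in $O(n)$ time (maintaining the constants $k^{j}j!$ across calls); summing over the $n$ levels gives the claimed $O(n^2)$ bound.
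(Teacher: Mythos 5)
Your proposal is correct and follows the same route as the paper, which derives the recurrence $\RANK{p_1\cdots p_n}=(n(c_n{+}1)-v_n)\cdot k^{n-1}(n-1)!+\RANK{q_1\cdots q_{n-1}}$ directly from the decomposition \eqref{eq:bminflip} into $kn$ sub-lists of $k^{n-1}(n-1)!$ entries each and then counts $n$ recursive calls of $O(n)$ work. The paper leaves the rank-preservation of the relabelling at the level of ``using appropriate relabelling''; your flip-equivariance argument (the colour shift depends only on the value, hence commutes with colour increment and with prefix reversal, and sends the first pre-perm of the sub-list to $1^02^0\cdots(n{-}1)^0$) supplies exactly the justification the paper omits.
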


\subsection{Efficient Unranking}

To find the permutation $\pii =p_1p_2\cdots p_n \in \CPERMS{n}{k}$ at position $rank$ in the listing $\RecCMin{1^02^0\cdots n^0}{k}$, we essentially inverse the operations applied in the ranking procedure:  We recursively determine the last symbol given the current rank,  then appropriately re-label a permutation found (recursively) at a specific rank in $\RecCMin{1^02^0\cdots (n{-}1)^0}{k}$.   Pseudocode for such an unranking algorithm is as follows. 

\begin{algorithm}[h]
  \small
  \begin{algorithmic}[1]
  
      \Function{UnRank}{$rank,t$}

		\If{$t=1$} \ \Return $1^{rank-1}$ \EndIf

		\State $x \gets \lfloor \frac{rank - 1}{k^{t-1} (t-1)!} \rfloor$
		\State  $p_t  \gets  (t-(x \bmod t))^{\lfloor x/t \rfloor}$
	
		\State$ q_1q_2\cdots q_{t-1}  \gets $ \Call{UnRank}{$rank - xk^{t-1}(t-1)!, \  t{-}1$}

		\For{$j \gets 1$ {\bf to} $t-1$}
				 \State $p_j \gets 1 + ((v_j + v_t - 1) \bmod t)$
				 \If{$v_j < v_t$} $c_j \gets (c_j + c_t) \bmod k$ 
				 \Else     \ \  $c_j \gets (c_j + c_t+1) \bmod k$ 
				  \EndIf
		\EndFor

		\State \Return $p_1p_2\cdots p_t$
	
    \EndFunction
  \end{algorithmic}
\end{algorithm} 
%

\begin{exam} \small
Consider the permutation $\pii = p_1p_2p_3$ at $rank=138$ in the listing $\RecCMin{ \C{1}{1}\C{2}{1}\C{3}{1}}{3}$.  Stepping through the function {\sc UnRank}(138,3), the variable $x$ is assigned $\lfloor 137/(9\cdot 2!) \rfloor = 7$.  Thus $p_3$ will be the last element in the 8th sublist from the recursive description in \eqref{eq:bminflip} which is given by $v_3 = 3 - (7 \bmod 3) = 2$ and $c_3 = \lfloor 7/3 \rfloor = 2$.  Thus $p_3 = \C{2}{3}$.   Subtracting the $7\cdot (9\cdot 2!) =  126$ permutations found in the first 7 sub-lists from the $rank=138$, we are interested in the 12th permutation in the 8th sub-list ending with $\C{2}{3}$.  This permutation is computed by recursively determining the 12th permutation in $\RecCMin{12}{3}$, which is  $\C{2}{2}\C{1}{2}$ and applying an appropriate relabelling to obtain $p_1p_2 =  \C{1}{1}\C{3}{2}$.  Thus $\pii = \C{1}{1}\C{3}{2}\C{2}{3}$.
\end{exam}

\begin{theorem}
The algorithm {\sc UnRank}($rank, n$) returns the permutation at position $rank$ in the listing 
$\RecCMin{1^02^0\cdots n^0}{k}$  in  $O(n^2)$ time.
\end{theorem}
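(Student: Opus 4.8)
The plan is to prove correctness by showing that {\sc UnRank} is exactly the inverse of the ranking function $\RANK{\cdot}$, and then to bound the running time by summing the work across the recursion. Because the ranking result established just above guarantees that $\RANK{\cdot}$ assigns the $k^nn!$ permutations of $\RecCMin{1^02^0\cdots n^0}{k}$ distinct ranks filling $\{1,2,\ldots,k^nn!\}$ in accordance with the recursive decomposition of the ranking formula, it suffices to prove that ${\sc UnRank}(\RANK{\pii},n)$ returns $\pii$ for every $\pii$. Since ranks and permutations are in bijection and the two maps are between finite sets of equal size, establishing {\sc UnRank} as a left inverse of $\RANK{\cdot}$ makes it the two-sided inverse, so ${\sc UnRank}(rank,n)$ is the unique permutation of rank $rank$.

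I would argue correctness by induction on the length $t$. In the base case $t=1$ the only value is $1$, the ranking formula gives $\RANK{1^{c_1}}=c_1+1$, and {\sc UnRank} returns $1^{rank-1}=1^{c_1}$. For the inductive step, recall that the ranking formula writes
\[
\RANK{p_1\cdots p_t}-1 = \bigl(t(c_t{+}1)-v_t\bigr)\,k^{t-1}(t-1)! + \bigl(\RANK{q_1\cdots q_{t-1}}-1\bigr),
\]
where each of the $tk$ sub-lists holds exactly $k^{t-1}(t-1)!$ permutations, so the trailing term lies in $\{0,\ldots,k^{t-1}(t-1)!-1\}$. Hence the quotient $x=\lfloor(rank-1)/(k^{t-1}(t-1)!)\rfloor$ recovers the sub-list index $t(c_t{+}1)-v_t$ exactly. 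Writing $x=tc_t+(t-v_t)$ with $t-v_t\in\{0,\ldots,t-1\}$ yields $v_t=t-(x\bmod t)$ and $c_t=\lfloor x/t\rfloor$, which are precisely the value and colour assigned to $p_t$. Moreover the residual rank passed down equals $\RANK{q_1\cdots q_{t-1}}$, so by the inductive hypothesis the recursive call returns the relabelled sub-permutation $q_1\cdots q_{t-1}$.

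The crux of the argument is verifying that the final \textbf{for} loop inverts the relabelling used by $\RANK{\cdot}$. The forward map sets $v'_i=(v_i-v_t)\bmod t$ and obtains the new colour by subtracting $c_t$ modulo $k$, with an extra $1$ when $v_i>v_t$. I would check that the value-assignment line $p_j\gets 1+((v'_j+v_t-1)\bmod t)$ restores $v_i$, using $v_i-1\equiv v'_j+v_t-1\pmod t$ together with $v_i-1\in\{0,\ldots,t-1\}$, and that, \emph{because the value is restored before the colour test}, the comparison $v_j<v_t$ refers to the original value and therefore selects the correct inverse update $c_i\gets(c'_j+c_t)\bmod k$ or $c_i\gets(c'_j+c_t+1)\bmod k$. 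This ordering subtlety, and the book-keeping of which quantities are pre- versus post-relabelling values, is the main place where care is needed; the remaining steps are routine modular arithmetic.

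Finally, for the running time, each invocation of {\sc UnRank} performs $O(t)$ arithmetic operations: computing $x$ and $p_t$, and running the length-$(t-1)$ loop, where the quantities $k^{t-1}(t-1)!$ are treated as precomputable and arithmetic is taken at unit cost, matching the model used for $\RANK{\cdot}$. Since the recursion descends through $t=n,n-1,\ldots,1$, the total work is $\sum_{t=1}^{n}O(t)=O(n^2)$, which is the stated bound.
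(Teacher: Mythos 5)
Your proposal is correct, and it follows exactly the route the paper intends: the paper states this theorem without a formal proof, describing {\sc UnRank} only as the inverse of the ranking procedure together with a worked example, so your induction on $t$ (recovering the sub-list index via quotient and remainder, then inverting the relabelling) is the natural fleshing-out of that sketch. Your observation that the value must be restored before the colour comparison, and the $O(t)$-per-level accounting giving $O(n^2)$, both match the pseudocode and the claimed bound.
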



\newpage
\section{Implementation of Algorithms in C} \label{app:code}

        \scriptsize
\begin{code}
#include <stdio.h>
#include <stdlib.h>
#define MAX_N 20

int n, k, a[MAX_N], color[MAX_N], f[MAX_N], c[MAX_N];
long int rank, total, powK[MAX_N], factorial[MAX_N];

//-------------------------------------------------
// OPERATIONS ON PERMUTATIONS
//-------------------------------------------------
void Flip(int t) {
    int i, b[MAX_N];

    for (i=1; i<=t; i++) b[i] = a[t-i+1];
    for (i=1; i<=t; i++) a[i] = b[i];

    for (i=1; i<=t; i++) b[i] = color[t-i+1];
    for (i=1; i<=t; i++) color[i] = (b[i]+1) 
}
//----------------------------------------
void Print() {
    int i;
    for (i=1; i<=n; i++)  printf("(
    total++;
}
//----------------------------------------
// RANKING
//----------------------------------------
long int Rank(int t){
    int j;

    if (t == 1) return color[1] + 1;

    for (j=1; j<t; j++)  {
        if (a[j] <  a[t]) color[j] = (color[j] - color[t] + k) 
        if (a[j] >  a[t]) color[j] = (color[j] - color[t] - 1 + k) 
        a[j] = (a[j] - a[t] + t) 
    }
    return (( (color[t]+1) * t - a[t]) * factorial[t-1] * powK[t-1] + Rank(t-1));
}
//----------------------------------------
// UNRANKING
//----------------------------------------
void UnRank(long int rank, int t){
    int j,x;

    if (t == 1) {
        a[1] = 1;
        color[1] = rank - 1;
        return;
    }

    x = (rank-1) / (factorial[t-1] * powK[t-1]);
    rank = rank - x * factorial[t-1] * powK[t-1];

    // x ranges from 0 to kn-1
    a[t] = t - (x 
    color[t] = (int) x/t;

    UnRank(rank, t-1);

    for (j=1; j<t; j++)  {
        a[j] =  1 + (a[j] + a[t] - 1) 
        if (a[j] <  a[t]) color[j] = (color[j] + color[t]) 
        if (a[j] >  a[t]) color[j] = (color[j] + color[t] + 1) 
    }
}
//----------------------------------------
// SUCCESSOR
//----------------------------------------
void Successor() {
    int j, incr=0;

    for (j=1; j<n; j++) {
        if (a[j] < a[j+1]) incr++;
        if (incr == 2 || (incr == 1 && a[j+1] < a[1])) break;

        if (k > 1 && a[j] < a[j+1] && (color[j+1] - color[j] + k) 
        if (k > 1 && a[j] > a[j+1] &&  color[j+1] != color[j]) break;
    }
    Flip(j);
}
//----------------------------------------
// GENERATION - BY FLIP SEQUENCE
//----------------------------------------
int Next() {
    int j;

    if (k == 1) { j = f[2];   f[2] = 2;  }
    else        { j = f[1];   f[1] = 1;  }

    c[j]++;
    if (c[j] == k*j-1) {
        c[j] = 0;
        f[j] = f[j+1];
        f[j+1] = j+1;
    }
    return(j);
}
//----------------------------------------
void Gen() {
    int j;

    // INITIAL PERM
    for (j=1; j<=n; j++) a[j] = j;
    for (j=1; j<=n; j++) color[j] = 0;
    
    for (j=1; j<=n+1; j++) c[j] = 0;
    for (j=1; j<=n+1; j++) f[j] = j;

    do {
        Print();  printf("\n");
        j= Next();
        Flip(j);

    } while (j <= n);

    printf("Total = 
}
//================================================================================
int main() {
    int i,j,type;

    //------------------------------------
    // INPUT
    //------------------------------------
    printf("  1. Generation by flips \n");
    printf("  2. Ranking \n");
    printf("  3. UnRanking \n");
    printf("  4. Successor rule \n");
    printf("\n ENTER selection #: ");    scanf("

    if (type < 0 || type > 4)  { printf("\n INVALID ENTRY\n\n");  exit(0); }

    // long int constraints: MAX n seems to be 20 or 16 (k=2)
    printf(" ENTER order n: ");   scanf("
    printf(" ENTER colors k: ");  scanf("

    factorial[0] = 1;
    for (j=1; j< MAX_N; j++) factorial[j] = factorial[j-1] * j;
    powK[0] = 1;
    for (j=1; j< MAX_N; j++) powK[j] = powK[j-1] * k;

    if (type == 2 || type == 4) {
        printf(" ENTER coloured permutation of form:  p_1,c_1  p_2,c_2  ... ");
        for (i=1; i<=n; i++) scanf("
    }
    if (type == 3) {
        rank = factorial[n] * powK[n];
        printf(" ENTER rank (between 1 and 
        scanf("
    }
    printf("\n");
    //------------------------------------

    if (type == 1) Gen();
    if (type == 2) {
        printf("The rank of permutation ");
        Print();
        rank = Rank(n);
        printf(" is:  
    }
    if (type == 3) {
        UnRank(rank,n);
        printf("The permutation of rank 
        Print(); printf("\n");
    }
    if (type == 4) {
        printf("The successor of permutation ");
        Print();
        Successor();
        printf(" is:  ");
        Print(); printf("\n");
    }
}
\end{code}

\end{document}